\definecolor{darkblue}{rgb}{0,0,0.4}
\def\amsbb{\use@mathgroup \M@U \symAMSb}
\newcommand*{\@old@slash}{}\let\@old@slash\slash
\def\slash{\relax\ifmmode\delimiter"502F30E\mathopen{}\else\@old@slash\fi}
\newtheorem{theorem}{Theorem}[section]
\newtheorem{lemma}[theorem]{Lemma}
\newtheorem{corollary}[theorem]{Corollary}
\newtheorem{proposition}[theorem]{Proposition}
\newtheorem{claim}[theorem]{Claim}
\newtheorem{example}[theorem]{Example}
\newtheorem{definition}[theorem]{Definition}
\theoremstyle{remark}
\newtheorem{remark}[theorem]{Remark}
\newsavebox{\cgc}
\newsavebox{\Dxy}
\title[The Homotopy Type of Spaces of Flat Connections]{The Homotopy Type of Spaces of Flat Connections for Classical Lie Groups}
\author{Andrew Davis}
\begin{document}
\begin{abstract}
    Let $M$ be a smooth manifold. We use Chern-Weil theory to study the characteristic classes of principal $G$-bundles built from continuous families of $\pi_{1}(M)$-representations, where $G$ is a compact Lie group. We then relate these families to the functorial map $$\text{Hom}(\pi_{1}(M), G)\rightarrow\text{Map}_{*}(M,BG)$$ and use this relationship to study the weak homotopy type of the space of flat connections for $U(n)$, $O(n)$, $SO(n)$, and Spin($n$) bundles. 
\end{abstract}
\maketitle

\renewcommand{\v}[1]{{\mathbf #1}}

\newcommand{\rk}{\mathrm{rk}}
\newcommand{\bbA}{\asmbb{A}}
\newcommand{\bbC}{\amsbb{C}}
\newcommand{\C}{\amsbb{C}}
\newcommand{\N}{\amsbb{N}}
\newcommand{\bbN}{\amsbb{N}}
\newcommand{\R}{\amsbb{R}}
\newcommand{\bbR}{\amsbb{R}}
\newcommand{\bbRP}{\amsbb{RP}}
\newcommand{\Z}{\amsbb{Z}}
\newcommand{\bbZ}{\amsbb{Z}}
\newcommand{\mcZ}{\mathcal{Z}}
\newcommand{\bbF}{\amsbb{F}}
\newcommand{\bbK}{\amsbb{K}}
\newcommand{\bbQ}{\amsbb{Q}}
\newcommand{\ignore}[1]{}
\newcommand{\A}{\mathcal{A}}
\newcommand{\B}{\mathcal{B}}
\newcommand{\mcC}{\mathcal{C}}
\newcommand{\mcD}{\mathcal{D}}
\newcommand{\D}{\mathcal{D}}
\newcommand{\E}{\mathcal{E}}
\newcommand{\mF}{\mathcal{F}}
\newcommand{\G}{\mathcal{G}}
\newcommand{\mcG}{\mathcal{G}}
\newcommand{\mH}{\mathcal{H}}
\newcommand{\mcH}{\mathcal{H}} %Holonomy map
\newcommand{\I}{\mathcal{I}}
\newcommand{\mL}{\mathcal{L}}
\newcommand{\mcN}{\mathcal{N}}
\newcommand{\M}{\mathcal{M}}
\newcommand{\mO}{\mathcal{O}}
\newcommand{\mcP}{\mathcal{P}}
\newcommand{\mcR}{\mathcal{R}}
\newcommand{\qcd}{\bbQ cd}
\newcommand{\FP}{\textrm{FP}}

\newcommand{\T}{\mathcal{T}}
\newcommand{\U}{\mathcal{U}}
\newcommand{\V}{\mathcal{V}}

\newcommand{\bS}{\mathbf{S}}
\newcommand{\bT}{\mathbf{T}}
\newcommand{\bd}{\mathbf{d}}

\newcommand{\Int}{\textrm{Int}}
\newcommand{\srmx}[1]{\xmaps{#1}}
\newcommand{\sth}{\textrm{th}}
\newcommand{\srlm}[1]{\stackrel{#1}{\lmaps}}
\newcommand{\Spectra}{\mathbf{Spectra}}
\newcommand{\bG}{\mathcal{G}_0} %Based gauge group
\newcommand{\abs}[1]{\left| #1\right|}
\newcommand{\ord}[1]{\Delta \left( #1 \right)}
\newcommand{\leqs}{\leqslant}
\newcommand{\geqs}{\geqslant}
\newcommand{\heq}{\simeq}
\newcommand{\iso}{\simeq}
\newcommand{\maps}{\longrightarrow}
\newcommand{\lmaps}{\longleftarrow}
\newcommand{\injects}{\hookrightarrow}
\newcommand{\homeo}{\cong}
\newcommand{\surjects}{\twoheadrightarrow}
\newcommand{\isom}{\cong}
\newcommand{\cross}{\times}
\newcommand{\normal}{\vartriangleleft}
\newcommand{\wt}[1]{\widetilde{#1}} %wide tilde for M's
\newcommand{\fc}{\mathcal{A}_{\mathrm{flat}}} %Space of flat connections
\newcommand{\flc}{\mathcal{A}_{\mathrm{fl}}}

\newcommand{\Ch}{\mathrm{Ch}}
\newcommand{\Rdef}{R^{\mathrm{def}}}
\newcommand{\Sym}{\textrm{Sym}}
\newcommand{\vect}[1]{\stackrel{\rightharpoonup}{\mathbf #1}}
\newcommand{\SR}{\mathcal{SR}}
\newcommand{\SRe}{\mathcal{SR}^{\mathrm{even}}}
\newcommand{\Rep}{\mathrm{Rep}}
\newcommand{\SRep}{\mathrm{SRep}}
\newcommand{\Hom}{\mathrm{Hom}}
\newcommand{\HHom}{\mathcal{H}\mathrm{om}}
\newcommand{\Lie}{\mathrm{Lie}}
\newcommand{\GL}{\mathrm{GL}}
\newcommand{\K}{K^{\mathrm{def}}}
\newcommand{\mK}{\mathcal{K}_{\mathrm{def}}}
\newcommand{\SK}{SK_{\mathrm{def}}}
\newcommand{\dom}{\mathrm{dom}}
\newcommand{\codom}{\mathrm{codom}}
\newcommand{\Ob}{\mathrm{Ob}}
\newcommand{\Mor}{\mathrm{Mor}}
\newcommand{\+}[1]{\underline{#1}_+}
\newcommand{\Fin}{\Gamma^{\mathrm{op}}}
\newcommand{\f}[1]{\underline{#1}}
\newcommand{\hofib}{\mathrm{hofib}}
\newcommand{\Stab}{\mathrm{Stab}}
\newcommand{\Css}{\mathcal{C}_{ss}}
\newcommand{\Map}{\mathrm{Map}}
\newcommand{\bMap}{\mathrm{Map_*}}
\newcommand{\bdMap}{\mathrm{Map_*^\delta}}
\newcommand{\flatc}{\mathcal{A}_{\mathrm{flat}}}
\newcommand{\F}[1]{\mathrm{Flag}(\vect{#1})}
\newcommand{\p}{\vect{p}}
\newcommand{\avg}{\mathrm{avg}}
\newcommand{\smsh}[1]{\ensuremath{\mathop{\wedge}_{#1}}}
\newcommand{\Vect}{\mathrm{Vect}}
\newcommand{\bv}{\bigvee}
\newcommand{\Gr}{\mathrm{Gr}}
\newcommand{\Mf}{\mathcal{M}_{\textrm{flat}}}
\newcommand{\ku}{\mathbf{ku}}
\newcommand{\Susp}{\Sigma}
\newcommand{\Id}{\mathrm{Id}}
\newcommand{\id}{\textrm{Id}}
\newcommand{\xmaps}{\xrightarrow}
\newcommand{\srm}[1]{\stackrel{#1}{\maps}}
\newcommand{\srt}[1]{\stackrel{#1}{\to}}
\newcommand{\sm}{\wedge}
\newcommand{\conv}{\Rightarrow}
\newcommand{\Tor}{\textrm{Tor}}
\newcommand{\goesto}{\mapsto}
\newcommand{\nd}{\noindent}
\newcommand{\lie}[1]{\mathcal{#1}}
\newcommand{\rK}{K^\textrm{rep}}
\newcommand{\wtrK}{\wt{K}^\textrm{rep}}
\newcommand{\rKU}{KU^\textrm{rep}}
\newcommand{\wtrKU}{\wt{KU}^\mathrm{rep}}
\newcommand{\cc}{\Box}
\newcommand{\ol}[1]{\overline{#1}}
\def\co{\colon\thinspace}

\newcommand{\Span}{\mathrm{Span}}
\newcommand{\Img}{\mathrm{Im}}
 \newcommand{\s}[1]{\vspace{#1 in}}
\newcommand{\e}{\emph}
\newcommand{\vv}{\vec{v}}
\newcommand{\ww}{\vec{w}}
\newcommand{\ee}{\vec{e}}
\newcommand{\xx}{\vec{x}}
\newcommand{\bb}{\vec{b}}
 \newcommand{\ra}{\rangle}
\newcommand{\la}{\langle}
 
\newcommand{\defn}{\mathrel{\mathop :}=}
\newcommand{\psubset}{\subsetneqq}
\newcommand{\propersubset}{\subsetneqq}
\newcommand{\tR}{\wt{\mcR}}
\newcommand{\coker}{\textrm{coker}}

\section{Introduction}

Let $\Gamma$ be a discrete group such that $B\Gamma$ has the homotopy type of a finite CW complex. Baird and Ramras \cite{bairdramras} used differential and homotopy theoretic methods to study the functorial map $$\text{Hom}(\Gamma,GL_{n}(\bbC))\xrightarrow{B}\text{Map}_{*}(B\Gamma,BGL_{n}(\bbC))$$ from the space of complex representations to the space of based continuous maps between classifying spaces. This map is closely related to the holonomy of flat connections on complex vector bundles. Notably, Baird and Ramras were able to show the following:

\begin{theorem}[\cite{bairdramras} Corollary 1.3]\label{rbthm}
    Let M be a closed, smooth, aspherical manifold of dimension d, and let $E\rightarrow M$ be a flat principal $U(n)$-bundle over M. Let $\mathcal{A}_{\mathrm{flat}}(E)$ denote the space of flat, unitary connections on E. Then for each $A_0\in \mathcal{A}_{\mathrm{flat}}(E)$ and each $m$ such that $0< m\leq 2n-d-1$, we have $$\mathrm{Rank}(\pi_{m}(\mathcal{A}_{\mathrm{flat}}(E),A_0))\geq \sum_{i=1}^{\infty}\beta_{m+2i+1}(M).$$ In addition, if $\sum_{i=1}^{\infty}\beta_{2i+1}(M)>0$ and $2n-d-1\geq 0$, then $\mathcal{A}_{\mathrm{flat}}(E)$ has infinitely many path components. 
\end{theorem}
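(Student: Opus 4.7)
The plan is to translate the question about $\flatc(E)$ into a question about the functorial map $B\colon \Hom(\pi_{1}(M), U(n)) \to \bMap(M, BU(n))$ (well-defined since $M$ is aspherical, so $M \simeq B\pi_{1}(M)$), and then to use Chern-Weil theory to control the image of $B_*$ on homotopy groups.

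First I would set up the holonomy fibration. Fixing a basepoint $p \in M$ and a frame at the fiber over $p$, holonomy realizes
$$\mathcal{G}_0(E) \longrightarrow \flatc(E) \xrightarrow{\mathrm{hol}} \Hom(\pi_{1}(M), U(n))_E$$
as a principal bundle for the based gauge group $\mathcal{G}_0(E)$, with target the component of $\Hom$ corresponding to the topological type of $E$. By the Atiyah-Bott identification $B\mathcal{G}_0(E) \simeq \bMap(M, BU(n))_f$, where $f$ classifies $E$, we have $\pi_m(\mathcal{G}_0(E)) \cong \pi_{m+1}(\bMap(M, BU(n))_f)$. Naturality of the classifying-space construction identifies the connecting map
$$\partial\colon \pi_{m+1}\bigl(\Hom(\pi_{1}(M), U(n))_E\bigr) \longrightarrow \pi_{m+1}\bigl(\bMap(M, BU(n))_f\bigr)$$
(up to sign) with the map induced by $B$ on $\pi_{m+1}$, so the long exact sequence of the fibration yields
$$\mathrm{Rank}\bigl(\pi_m(\flatc(E), A_0)\bigr) \;\geq\; \mathrm{Rank}\bigl(\pi_{m+1}(\bMap(M, BU(n))_f)\bigr) \,-\, \mathrm{Rank}(\mathrm{Im}\,B_*).$$
Using $BU(n) \simeq_{\bbQ} \prod_{i=1}^{n} K(\bbQ, 2i)$, adjunction, and Poincar\'e duality on $M$, the first rank on the right is a sum of Betti numbers of $M$ in a specified parity class, which in the stable range $m \leq 2n - d - 1$ captures every such Betti number.

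The main obstacle is bounding $\mathrm{Rank}(\mathrm{Im}\,B_*)$ via Chern-Weil theory. A class in $\pi_{m+1}(\Hom(\pi_1(M), U(n))_E)$ is represented by a continuous family of flat connections parametrized by $S^{m+1}$, producing a principal $U(n)$-bundle on $S^{m+1} \times M$ whose connection is flat in the $M$-direction. The image of $B_*$ is detected by the rational Chern classes of this bundle, and Chern-Weil theory forces the components involving too many $M$-derivatives to vanish. Counting the surviving transgression components carefully should show that the deficit between the two ranks above is at least $\sum_{i=1}^{\infty} \beta_{m + 2i + 1}(M)$, giving the main inequality. Specializing to $m = 0$ under the hypotheses of the final sentence forces $\mathrm{coker}(\partial)$ to have positive rank as an abelian group, and the orbit description of $\pi_0(\flatc(E))$ from the $\pi_0(\mathcal{G}_0(E))$-action on $\pi_0(\Hom(\pi_{1}(M), U(n))_E)$ then yields infinitely many path components.
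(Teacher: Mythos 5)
Your architecture is the same one used in this paper (and in Baird--Ramras): the holonomy principal $\mathcal{G}_0(E)$-bundle $\flatc(E)\to\Hom(\pi_1(M),U(n))_E$, the identification $B\mathcal{G}_0(E)\simeq \Map_*(M,BU(n))_f$, the long exact sequence, Chern--Weil vanishing for spherical families, and the stable range $m\le 2n-d-1$ entering through the connectivity of $BU(n)\to BU$; this is exactly the content of Theorem \ref{lbl9}, Theorem \ref{lbl7} and Corollary \ref{lbl10} (and the remark following it, which spells out the identification of the holonomy boundary map with $(f^*\circ B)_*$). The problem is that your two load-bearing steps are asserted rather than argued, and one is mis-stated. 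A class in $\pi_{m+1}(\Hom(\pi_1(M),U(n)),\rho)$ is a continuous spherical family of \emph{representations}, not ``a continuous family of flat connections'': you cannot in general lift a sphere in the base of the holonomy fibration to $\flatc(E)$ -- the obstruction to doing so is precisely the connecting map you are trying to compute. What the family gives you is the bundle $E_\rho\to S^{m+1}\times M$, and to run Chern--Weil on it you need a genuine smooth, fiberwise flat connection. A continuous family provides none; you must first homotope $\rho$ to a smooth family, which uses that $\Hom(\Gamma,U(n))$ is a real algebraic set (Lemma \ref{lbl5} and the Claim following it), and then construct the fiberwise flat connection (Proposition \ref{lbl2}). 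Without these steps ``Chern--Weil forces the components to vanish'' has no starting point. Once they are in place, the correct bookkeeping is that every curvature factor contains at least one sphere direction, so Chern--Weil forms of total degree exceeding $2(m+1)$ vanish (Theorem \ref{lbl4}); your sentence ``counting the surviving transgression components carefully should show\dots'' is exactly the statement of Theorem \ref{lbl7} and still has to be proved, either via the rational splitting of $BU(n)$ you invoke or, as here, via Lemma \ref{lbl8} realizing prescribed Chern classes on $S^{m+1}\wedge M$ by K-theory classes (Poincar\'e duality plays no role in that count).

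Second, identifying the connecting map $\partial$ with $B_*$ under $\pi_m(\mathcal{G}_0)\cong\pi_{m+1}(\Map_*(M,BU(n)))$ is not mere ``naturality of the classifying-space construction'': it is the main content of Theorem \ref{lbl9}, proved by building $\mathcal{T}:\flatc(E)\to\Map_*^{G}(E,EG)$ from holonomy and the $E_\rho$ construction, checking commutativity of the square over $\Hom^E(\Gamma,G)\to\Map_*^E(M,BG)$ (which rests on the computation of the classifying map of $E_\rho$ in Proposition \ref{lbl6}), and invoking Gottlieb's theorem that $\Map_*^{G}(E,EG)$ is weakly contractible. Finally, your last sentence should be phrased as: $\mathcal{G}_0(E)$ acts on $\flatc(E)$ with quotient $\Hom_E$, and the set of components of $\flatc(E)$ lying over the component of $\mathcal{H}(A_0)$ is in bijection with $\pi_0(\mathcal{G}_0)/\mathrm{Im}(\partial)$, so positive rank of the cokernel in the $m=0$ case (which requires $2n-d-1\ge 0$) yields infinitely many components; there is no action of $\pi_0(\mathcal{G}_0(E))$ on $\pi_0(\Hom_E)$ doing this work.
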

This is in contrast to the situation for complex vector bundles over surfaces, where Yang-Mills theory shows that the space of flat connections is highly connected with respect to the rank of the bundle (see \cite{ramras-yangmills} Proposition 4.9).\\

In this paper we generalize the result of Baird and Ramras by dropping the asphericity condition on the base manifold and extending beyond complex vector bundles to the case of real vector bundles and principal $SU(n)$, $SO(n)$, and Spin($n$) bundles, leading to the following lower bounds:

\begin{theorem}\label{introthm1}
    Let M be closed, smooth, connected manifold of dimension d, and let $E\rightarrow M$ be a flat principal $G$-bundle over M, where $G$ is $U(n), O(n), SU(n), SO(n),$ or Spin($n$). 
    
    \begin{itemize} 
    
    \item If $G=O(n)$, $ SO(n),$ or Spin$(n)$, then for each $A_0\in \mathcal{A}_{\mathrm{flat}}(E)$ and each $m$ such that $0< m\leq n-d-2$, $$\mathrm{Rank}(\pi_{m}(\mathcal{A}_{\mathrm{flat}}(E),A_0))\geq \sum_{i=1}^{\infty}\beta_{3m+4i+1}(M).$$ In addition, if $\sum_{i=1}^{\infty}\beta_{4i+1}(M)>0$ and $n-d-2\geq 0$, then $\mathcal{A}_{\mathrm{flat}}(E)$ has infinitely many path components. 

    \item If $G=SU(n)$, then for each $A_0\in \mathcal{A}_{\mathrm{flat}}(E)$ and each $m$ such that $0< m\leq 2n-d-1$, $$\mathrm{Rank}(\pi_{m}(\mathcal{A}_{\mathrm{flat}}(E),A_0))\geq \sum_{i=2}^{\infty}\beta_{m+2i+1}(M).$$ In addition, if $\sum_{i=2}^{\infty}\beta_{2i+1}(M)>0$ and $2n-d-1\geq 0$, then $\mathcal{A}_{\mathrm{flat}}(E)$ has infinitely many path components.

    \item If $G=U(n)$, we achieve the same bound as $G=SU(n)$ except the sum of Betti numbers begins at $i=1$.
    \end{itemize}
\end{theorem}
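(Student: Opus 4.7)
The plan is to adapt Baird and Ramras's approach \cite{bairdramras}, routing the flat connections through the representation variety, but working directly on $M$ rather than passing through $B\pi_{1}(M)$; this is what lets us drop the asphericity hypothesis. First, using a basepoint of $M$, the holonomy correspondence identifies the quotient of $\flatc(E)$ by the based gauge group with a union of path components of $\Hom(\pi_{1}(M), G)$. Combined with the standard identification of the based gauge group as a mapping space, this produces a fibration whose long exact sequence reduces the problem to producing many elements in $\pi_{m}(\Hom(\pi_{1}(M), G))$.

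To detect such elements I would study the functorial map $B\co \Hom(\pi_{1}(M), G)\to \bMap(M, BG)$ given by the associated flat bundle construction, and post-compose with integral characteristic classes $c\co BG\to K(\Z, k)$: Chern classes $c_{i}$ of degree $2i$ for $U(n)$ (starting at $i=2$ in the $SU(n)$ case, since $c_{1}$ vanishes), and Pontryagin classes $p_{i}$ of degree $4i$ for the orthogonal and spin cases. The target has $\pi_{m}(\bMap(M, K(\Z, k)))\cong \wt{H}^{k-m}(M; \Z)$, and Poincar\'e--Lefschetz duality (with local coefficients when $M$ is non-orientable) identifies the rank of this group with a Betti number of $M$. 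Summing over the characteristic classes assembles the stated lower bounds, and the connectivity bounds $2n-d-1$ and $n-d-2$ reflect the ranges in which the collection of Chern, respectively Pontryagin, classes jointly detects the relevant homotopy of $BU(n)$ and $BSO(n)$.

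The main technical obstacle is the absence of asphericity. In the Baird--Ramras setting one identifies $M$ with $B\pi_{1}(M)$, so $B$ admits a loop-space model inside $\Hom(\pi_{1}(M), G)$ and its effect on $\pi_{m}$ is transparent. In general the classifying map $M\to B\pi_{1}(M)$ need not be a homotopy equivalence, and one cannot a priori read off characteristic classes of flat bundles on $M$ from those on $B\pi_{1}(M)$. This is where the Chern--Weil computation announced in the abstract is essential: given a continuous $S^{m}$-family of representations, the associated flat bundle on $M\times S^{m}$ has characteristic forms that one can write down explicitly in terms of the connection, and integrating over the $S^{m}$ factor yields an explicit homomorphism from $\pi_{m}(\Hom(\pi_{1}(M), G))$ to $H^{k-m}(M; \Z)$ whose image one can show to be of the claimed rank after tensoring with $\Q$. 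Once this identification is in place, specializing to $m=0$ gives the infinitely-many-components statement in each case.
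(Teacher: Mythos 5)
Your outline assembles many of the right ingredients (the based-gauge-group fibration over the representation space, the map $B$ into $\mathrm{Map}_*(M,BG)$, Chern--Weil vanishing for spherical families, stability ranges, structure-group reduction), but the logical direction of the main step is inverted, and as stated the reduction fails. You claim the long exact sequence of $\mathcal{G}_0(E)\to\mathcal{A}_{\mathrm{flat}}(E)\to\Hom(\pi_1(M),G)$ reduces the theorem to producing many elements of $\pi_m(\Hom(\pi_1(M),G))$, to be detected through characteristic classes of the associated flat families. The classes the theorem counts do not come from $\pi_m(\Hom(\pi_1(M),G))$ at all: they come from the gauge-group side, i.e.\ from $\pi_m(\mathcal{G}_0)\cong\pi_{m+1}(\Map_*^E(M,BG))$ modulo the image of $(f^*\circ B)_*$. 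In the paper this is packaged as the weak equivalence $\mathcal{A}_{\mathrm{flat}}(E)\simeq \hofib_{B(\mathcal{H}(A))\circ f}(f^*\circ B)$ of Theorem \ref{lbl9}, and the lower bound on $\pi_m$ is the rank of $\coker\bigl((f^*\circ B)_*\bigr)$ in degree $m+1$ --- note the degree shift, which is exactly why the stated bounds involve $\beta_{m+2i+1}(M)$ rather than the groups $\tilde H^{k-m}(M)$ your computation of $\pi_m(\mathrm{Map}_*(M,K(\Z,k)))$ would produce. A concrete counterexample to your reduction: for $M=S^{2k+1}$ the space $\Hom(\pi_1(M),U(n))$ is a single point, so every $\pi_m(\Hom(\pi_1(M),G))$ vanishes, yet the theorem asserts (and the example after Corollary \ref{lbl10} records) that $\mathcal{A}_{\mathrm{flat}}(E)$ has infinitely many path components once $n$ is large enough relative to $k$.

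Your detection mechanism is also self-defeating. The boundary map of the holonomy fibration is identified with $(f^*\circ B)_*$, so an element of $\pi_m(\Hom(\pi_1(M),G))$ whose image under $(f^*\circ B)_*$ is nonzero (in particular, one detected by a characteristic class of the associated family) is precisely an element that does \emph{not} lift to $\pi_m(\mathcal{A}_{\mathrm{flat}}(E))$; moreover, in the degrees relevant to the stated bounds the Chern--Weil argument (Theorem \ref{lbl4}) shows those characteristic classes vanish rationally, so your ``integrate over the $S^m$ factor'' homomorphism is zero rather than of large rank. The correct use of Chern--Weil is the opposite one: it shows the image of $(f^*\circ B)_*$ is small, and one then needs a separate construction to show the target is large --- Lemma \ref{lbl8} for complex $K$-theory, Lemma \ref{lbl11} with Proposition \ref{lbl12} for $KO$, realizing arbitrary classes in $H^{*}(S^{m+1}\wedge M;\bbQ)$ (up to nonzero rational multiples) as characteristic classes of non-flat bundles, together with the reductions $c_1=0$ for $SU(n)$ and $w_1=w_2=0$ for $SO(n)$ and Spin$(n)$; this is also where the shifted starting index $i\geq 2$ for $SU(n)$ and the unchanged bounds in the orthogonal cases come from, and the stability of $BU(n)$ and $BO(n)$ in the ranges $n\geq (m+1+d)/2$ and $n\geq m+d+2$ is what lets one work stably. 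Without this cokernel-based argument your outline does not yield the stated bounds.
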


%KEEP UPDATING%

\noindent To prove this we use the following theorem:

\begin{theorem}[Section 3]\label{lbl9}
   Let $M$ be a closed, connected, smooth manifold of dimension d with fundamental group $\Gamma=\pi_{1}(M,m_0)$, let $f:M\rightarrow B\Gamma$ be a map classifying the universal cover $\widetilde{M}\rightarrow M$ that takes the basepoint $m_0$ to the canonical basepoint of $B\Gamma$, and let $E$ be a flat principal $G$-bundle over $M$ with $G$ a compact Lie group. Then there is a weak equivalence $$\mathcal{A}_{\mathrm{flat}}(E)\simeq \hofib_{B(\mathcal{H}(A))\circ f}(f^{*}\circ B)$$ where $\mathcal{H}(A)\in \mathrm{Hom}(\pi_{1}(M),G)$ is any representation in the image of the holonomy map $\mathcal{A}_{\mathrm{flat}}(E)\xrightarrow{\mathcal{H}} \mathrm{Hom}(\pi_{1}(M),G)$ and $f^{*}\circ B$ is the composite \[\mathrm{Hom}(\Gamma,G)\xrightarrow{B}\mathrm{Map}_{*}(B\Gamma,BG)\xrightarrow{f^{*}}\mathrm{Map}_{*}(M,BG).\] 
\end{theorem}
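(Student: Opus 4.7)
The plan is to exhibit $\flatc(E)$ as the total space of a principal bundle over (a component of) $\Hom(\Gamma,G)$ and match this bundle, via $f^*\circ B$, with the pullback of a universal gauge-group bundle over $\bMap(M,BG)$. Specifically, I would show the holonomy map $\mcH\colon \flatc(E)\to \Hom(\Gamma,G)$ is a principal $\mcG_*(E)$-bundle onto its image (where $\mcG_*(E)$ denotes the based gauge group of $E$), and then recognize $\flatc(E)$ as the pullback of the universal $\mcG_*(E)$-bundle $E\mcG_*(E)\to B\mcG_*(E)\simeq \bMap(M,BG)^{[E]}$ along $f^*\circ B$.

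For the first step, fix a basepoint frame $e_0\in E_{m_0}$. Parallel transport along loops gives $\mcH$, and $\mcG_*(E)$ acts on $\flatc(E)$ by pullback, preserving holonomy. I would check the action is free (a $\mcG_*(E)$-stabilizer of $A$ must commute with $A$-parallel transport and fix $e_0$, hence be the identity) and that fibers of $\mcH$ are single orbits (two flat connections with the same holonomy are related by the gauge transformation defined fiberwise by parallel transport from $e_0$). Consequently the image of $\mcH$ is a union $\Hom(\Gamma,G)_E\subseteq \Hom(\Gamma,G)$ of components consisting of those representations $\rho$ for which $E_\rho:=\widetilde{M}\times_\rho G$ is isomorphic to $E$, and the orbit map induces a homeomorphism $\flatc(E)/\mcG_*(E)\cong \Hom(\Gamma,G)_E$.

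For the second step, I would invoke the standard identification of the component $\bMap(M,BG)^{[E]}$ with $B\mcG_*(E)$, under which the restriction of $f^*\circ B$ to $\Hom(\Gamma,G)_E$ classifies the principal $\mcG_*(E)$-bundle $\flatc(E)\to\Hom(\Gamma,G)_E$. Then the pullback of $E\mcG_*(E)\to B\mcG_*(E)$ along $f^*\circ B$ is (weakly equivalent to) $\flatc(E)$, and since $E\mcG_*(E)$ is contractible this pullback is precisely a model for $\hofib_{B\mcH(A_0)\circ f}(f^*\circ B)$, yielding the equivalence.

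The main obstacle will be the first step: verifying that $\mcH$ is genuinely a principal $\mcG_*(E)$-bundle rather than merely a free action with continuous bijective orbit map, i.e.\ exhibiting local sections. This typically requires passing to Sobolev completions of $\flatc(E)$ and $\mcG_*(E)$ and adapting slice arguments from Yang--Mills theory so that smoothness of the action and the implicit-function theorem give a local product structure. A related subtlety is to fix a model of $\bMap(M,BG)$ and of the universal bundle compatible enough with the holonomy-parallel-transport description that the bundle classified by $f^*\circ B|_{\Hom(\Gamma,G)_E}$ literally agrees with $\flatc(E)\to\Hom(\Gamma,G)_E$ up to a canonical isomorphism.
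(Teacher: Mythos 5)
Your plan has the same underlying skeleton as the paper's proof --- the paper's map $q\colon\mathrm{Map}_*^G(E,EG)\to\mathrm{Map}_*^E(M,BG)$ is exactly a model of the universal based-gauge-group bundle $E\mathcal{G}_0(E)\to B\mathcal{G}_0(E)$ with weakly contractible total space (Gottlieb) --- but the step you merely ``invoke'' as a standard identification is precisely the heart of the theorem, and as written this is a genuine gap. It is not standard that $f^*\circ B$, a purely functorial map defined simplicially from classifying-space constructions, classifies the holonomy bundle $\flatc(E)\to\mathrm{Hom}^E(\Gamma,G)$; and even granting that it does so up to isomorphism, knowing only the isomorphism class is not enough: to identify $\flatc(E)$ itself with the strict pullback of the universal bundle (hence with $\hofib_{B(\mathcal{H}(A))\circ f}(f^*\circ B)$) you need an actual bundle map over $f^*\circ B$ into a chosen model of the universal bundle. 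Concretely, what is missing is a continuous $\mathcal{G}_0(E)$-equivariant map $\mathcal{T}\colon\flatc(E)\to\mathrm{Map}_*^G(E,EG)$ covering $f^*\circ B\circ\mathcal{H}$. The paper builds it from two inputs your proposal never addresses: Proposition \ref{lbl6}, which shows that $E_\rho(M)$ is classified by the composite $\mathrm{ev}\circ(B\times\mathrm{id})\circ(\rho\times f)$, i.e.\ by $B\rho\circ f$; and the canonical basepoint- and connection-compatible isomorphism $\phi_A\colon E\cong E_{\mathcal{H}(A)}(M)$ with $\phi_A^*(A_{\mathcal{H}(A)})=A$ (Ramras), which yields $\mathcal{T}(A)=u_{\mathcal{H}(A)}\circ\tilde{f}_{\mathcal{H}(A)}\circ\phi_A$, together with continuity of $A\mapsto\phi_A$. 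Without this construction there is nothing that ties the analytic object $\flatc(E)$ to the homotopical map $f^*\circ B$, and your final pullback step does not go through.

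Two further comments. First, you do not actually need $\mathcal{H}$ to be a locally trivial principal $\mathcal{G}_0(E)$-bundle: it suffices, as in the argument the paper cites (Baird--Ramras Theorem 5.5), that $\mathcal{H}$ and $q$ are fibrations and that the induced comparison of fibers --- the inclusion of the Sobolev based gauge group into the continuous based gauge group $\mathrm{Map}_*^G(E,G)$ --- is a weak equivalence (Palais); so your ``main obstacle'' is real but is resolved by the Sobolev framework already in place rather than by new slice theorems. Second, be careful about the base of the homotopy fiber: it is taken over the based component of $B(\mathcal{H}(A))\circ f$, so the relevant domain is the preimage $\mathrm{Hom}^E(\Gamma,G)$ of that component under $f^*\circ B$, not the set $\{\rho : E_\rho\cong E\}$; for disconnected $G$ (e.g.\ $O(n)$) bundle isomorphism only controls free homotopy classes of classifying maps, so these two sets need not coincide a priori.
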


This allows us to study the space of flat connections on a $G$-bundle by understanding the weak homotopy type of $\hofib_{B(\mathcal{H}(A))\circ f}(f^{*}\circ B)$. In Section 2 we establish characteristic class restrictions on $G$-bundles classified by maps in the image of $f^{*}\circ B$. Then in Sections 3-5 we show that for the Lie groups $U(n)$, $SU(n)$, $O(n)$, $SO(n)$, and Spin$(n)$ these restrictions directly translate to information about the homotopy fiber.  

\subsection*{Acknowledgements} The author would like to thank Daniel Ramras for his generous support in my understanding and exploration of this topic.  
\section{Characteristic Classes of Generalized Flat Families}

Baird and Ramras observed that elements in $\pi_{*}(\text{Map}_{*}(B\Gamma,BGL_{n}(\bbC)))$ that are in the image of $$B_{*}:\pi_{*}(\text{Hom}(\Gamma,GL_{n}(\bbC)))\rightarrow \pi_{*}(\text{Map}_{*}(B\Gamma,BGL_{n}(\bbC)))$$ classify bundles built from spherical families of representations, and used Chern-Weil theory to show that the characteristic classes of these bundles vanish rationally in high degrees. In this section, we extend these results to general Lie groups.\\

First we recall some important definitions and results from Chern-Weil theory (see \cite{dupont} for full details). Let $G$ be a Lie group with Lie algebra $\mathfrak{g}$ and let $M$ be a smooth manifold. Note that we will be using $d\cdot$ and $(\cdot)_*$ interchangeably as notation for the differential of a map. 

\begin{definition}\label{def1}
A connection on a principal $G$-bundle $\pi:E\rightarrow M$ is a Lie algebra valued one-form $\theta \in \Omega^{1}(E;\mathfrak{g})$ such that:

\begin{enumerate}
    \item $\theta_x\circ v_x=$id, where $v_x:\mathfrak{g}\rightarrow T_{x}(E)$ is the differential of the map $h\mapsto x\cdot h$ at $e\in G$. We will call vectors in the image of $v_x$ vertical vectors. 
    \item $R_{g}^{*}(\theta)=$Ad$(g^{-1})\circ \theta$ for all $g\in G$, where $R_g: E\rightarrow E$ is the $G$-action on $E$ and $Ad(g):\mathfrak{g}\rightarrow \mathfrak{g}$ is the differential at the identity of the map $x\mapsto gxg^{-1}$. 
\end{enumerate}

We call vectors in $\mathrm{ker}(\theta)$ horizontal.
\end{definition}

\begin{definition}
    The curvature of a connection $\theta$ is the Lie algebra valued two-form $\Omega=d\theta+\frac{1}{2} [\theta,\theta]\in \Omega^{2}(E;\mathfrak{g})$, where $[\theta,\theta]$ is the image of $\theta \wedge \theta$ under the map \\ $\Omega^{2}(E;\mathfrak{g}\otimes\mathfrak{g})\rightarrow \Omega^{2}(E;\mathfrak{g})$ induced by the Lie bracket $[\hspace{1mm},\hspace{1mm}]:\mathfrak{g}\otimes \mathfrak{g}\rightarrow \mathfrak{g}$.\\ $\theta$ is called flat if $\Omega=0$.
\end{definition}

\noindent If we consider the trivial bundle $M\times G \rightarrow M$, there is a canonical flat connection, the Maurer-Cartan connection, given by $\theta^{\text{MC}}_{(x,g)}=(L_{g^{-1}}\circ \pi_{2})_*$ where $\pi_2:M\times G\rightarrow G$ is projection onto the second coordinate, and $L_{g^{-1}}:G\rightarrow G$ is left multiplication by $g^{-1}$.

\begin{lemma}[Dupont Theorem 3.21 \cite{dupont}]
     A connection $\theta$ on principal bundle $E\rightarrow M$ is flat if and only if around every point in $M$ there is a neighborhood $U$ and a trivialization $\phi_{U}:E\vert_{U}\rightarrow U\times G$ so that $\theta\vert_{U}=(\phi_{U})^{*}\theta^{\text{MC}}$.
\end{lemma}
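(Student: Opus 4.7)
The plan is to recognize this as an application of the Frobenius integrability theorem to the horizontal distribution $H := \ker(\theta) \subset TE$. The backward direction is essentially a computation: pullback commutes with $d$ and $[\,\cdot\,,\,\cdot\,]$, so $\phi_U^*\theta^{\text{MC}}$ has curvature $\phi_U^*\Omega^{\text{MC}}$, and a direct check (or the Maurer--Cartan equation on $G$) shows $\Omega^{\text{MC}} = 0$. Thus $\Omega$ vanishes on a neighborhood of every point of $E$, hence everywhere, and $\theta$ is flat.

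For the forward direction I would first use the two axioms of Definition \ref{def1} to check that $H$ is a rank-$d$ subbundle of $TE$ complementary to the vertical subbundle, so that $d\pi$ restricts to an isomorphism $H_x \to T_{\pi(x)}M$ at every point. Flatness is then equivalent to involutivity of $H$: for horizontal vector fields $X, Y$ on $E$ one has $d\theta(X,Y) = -\theta([X,Y])$ and $[\theta,\theta](X,Y) = 2[\theta(X),\theta(Y)] = 0$, so $\Omega(X,Y) = -\theta([X,Y])$ and $\Omega = 0$ forces $[X,Y]$ to be horizontal. Frobenius then supplies, for each $x_0 \in \pi^{-1}(m_0)$, an integral submanifold $N$ through $x_0$; since $\pi|_N$ is a local diffeomorphism onto a neighborhood $U$ of $m_0$, I obtain a horizontal local section $s \colon U \to E$, and the candidate trivialization is $\phi(x,g) := s(x) \cdot g$ with inverse $\phi_U \colon E|_U \to U \times G$.

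To finish I would verify $\phi^*\theta^{\text{MC}} = \theta|_{E|_U}$ by splitting each tangent vector at $(x,g) \in U \times G$ into its $U$- and $G$-components. For $\phi_*(v,0) = (R_g)_* s_*(v)$, horizontality of $s$ together with axiom (2) gives $\theta(\phi_*(v,0)) = \mathrm{Ad}(g^{-1})\theta(s_*v) = 0 = \theta^{\text{MC}}(v,0)$; for the vertical piece $\phi_*(0,w) = v_{s(x)\cdot g}((L_{g^{-1}})_* w)$, axiom (1) yields $\theta(\phi_*(0,w)) = (L_{g^{-1}})_* w = \theta^{\text{MC}}(0,w)$. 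The main obstacle is really the Frobenius step, specifically setting up the horizontal distribution as constant-rank and involutive and then extracting a genuinely horizontal local section whose $G$-translates sweep out $E|_U$; once that section is in hand, matching the two connection one-forms is a mechanical application of the defining axioms for a connection.
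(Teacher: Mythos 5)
Your proposal is correct: the backward direction via naturality of curvature and the Maurer--Cartan equation, and the forward direction via involutivity of $\ker(\theta)$, Frobenius, a horizontal local section $s$, and the verification $\phi^*\theta^{\text{MC}}=\theta$ using axioms (1) and (2) all go through (one should also note that $(x,g)\mapsto s(x)\cdot g$ is $G$-equivariant, so its inverse really is a trivialization). The paper does not prove this lemma but cites Dupont's Theorem 3.21, whose argument is essentially the same Frobenius-theorem proof you give, so your route matches the standard one.
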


\begin{theorem}[Cartan, see Dupont Theorem 8.1 \cite{dupont}]\label{cartan}
    Let $I^{*}(G)$ denote the algebra of $G$-invariant homogeneous polynomials on $\mathfrak{g}$ as defined in Dupont (see \cite{dupont} pg. 62). If $G$ is a compact Lie group, then the Chern-Weil homomorphism $\omega:I^{*}(G)\rightarrow H^{*}(BG)$ given by $\omega(P)=[P(\Omega)]\in H^{2k}(BG)$ is a ring isomorphism. (Here we consider $BG$ as the geometric realization of the simplicial bar construction $NG$, which is a simplicial manifold.)
\end{theorem}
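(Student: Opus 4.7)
The plan is to reduce to the case of a maximal torus $T \subset G$, where both sides become polynomial algebras that match by direct inspection. First I would verify that $\omega$ is a well-defined ring homomorphism. Well-definedness amounts to showing that any two connections $\theta_0,\theta_1$ on the universal bundle produce cohomologous Chern-Weil forms; the standard trick is to interpolate $\theta_t=(1-t)\theta_0+t\theta_1$ and construct an explicit transgression form using $G$-invariance of $P$ and the Bianchi identity $d\Omega=[\Omega,\theta]$. The ring-homomorphism property is immediate from $(PQ)(\Omega)=P(\Omega)\wedge Q(\Omega)$.

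The content of the theorem is the isomorphism. Let $T\subset G$ be a maximal torus with Weyl group $W$. The strategy is to exploit two parallel restriction theorems that sit in a commuting square with the Chern-Weil maps:
\[
\begin{tikzcd}
I^{*}(G) \ar[r, "\omega_G"] \ar[d, "i^{*}"] & H^{*}(BG;\bbR) \ar[d, "(Bi)^{*}"] \\
I^{*}(T) \ar[r, "\omega_T"] & H^{*}(BT;\bbR).
\end{tikzcd}
\]
On the algebraic side, the Chevalley restriction theorem identifies $i^{*}$ with an isomorphism $I^{*}(G)\cong S(\mathfrak{t}^{*})^{W}$, the key inputs being that every adjoint orbit in $\mathfrak{g}$ meets $\mathfrak{t}$ and that $W$-averaging produces a section. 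On the topological side, Borel's theorem gives $(Bi)^{*}: H^{*}(BG;\bbR)\xrightarrow{\cong} H^{*}(BT;\bbR)^{W}$, proved via the Serre spectral sequence of $G/T\to BT\to BG$ and the fact that $H^{*}(G/T;\bbR)$ is concentrated in even degree by the Bruhat decomposition. Commutativity of the square is automatic from naturality of curvature under $Bi$.

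Thus it suffices to establish the theorem when $G$ is a torus, where invariance is vacuous and a Künneth argument reduces further to $G=S^{1}$. For the circle, $H^{*}(BS^{1};\bbR)=\bbR[x]$ with $|x|=2$ while $I^{*}(S^{1})=\bbR[y]$ with $|y|=2$; a direct curvature computation on the universal $S^{1}$-bundle (or, in Dupont's framework, on the simplicial model $ES^{1}\to BS^{1}$) shows $\omega(y)=\pm x$, which settles the base case.

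The main obstacle is Borel's computation of $H^{*}(BG;\bbR)$ via the fibration $G/T\to BT\to BG$; this is the deepest ingredient and is not purely formal. Chevalley's restriction and the torus case are comparatively routine, so essentially all the geometric content is packaged into verifying that the square above commutes and then citing (or reproving) Borel's theorem for the right-hand vertical map.
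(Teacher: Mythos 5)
The paper does not actually prove this statement; it is quoted (Cartan, via Dupont, Theorem 8.1), so the only comparison available is with the standard proof, and for compact \emph{connected} $G$ your outline is exactly that standard route (and essentially Dupont's): the torus/circle base case, Chevalley restriction on the algebraic side, Borel's theorem on the topological side, and the commuting restriction square.

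The genuine gap is connectedness. The theorem is stated, and is later used in the paper, for compact Lie groups that need not be connected --- Section 4 applies it to $G=O(n)$. With $W$ the Weyl group of the identity component, neither vertical map in your square is an isomorphism onto the $W$-invariants when $G$ is disconnected. Concretely, for $G=O(2n)$ with maximal torus $T\subset SO(2n)$: the Pfaffian restricts to $x_{1}\cdots x_{n}\in S(\mathfrak{t}^{*})^{W}$, which is $SO(2n)$-invariant but not $O(2n)$-invariant, so $i^{*}$ does not surject onto $S(\mathfrak{t}^{*})^{W}$; correspondingly the Euler class gives a $W$-invariant element of $H^{*}(BT;\bbR)$ not in the image of $H^{*}(BO(2n);\bbR)$, so $(Bi)^{*}$ does not surject onto $H^{*}(BT;\bbR)^{W}$ either. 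As written your reduction therefore proves the theorem only for connected $G$. The repair is standard but has to be included: either work with the larger group $N_{G}(T)/C_{G}(T)$ and reprove both restriction statements in that generality, or first reduce to the identity component $G_{0}$, using the finite covering $BG_{0}\to BG$ and a transfer argument to get $H^{*}(BG;\bbR)\cong H^{*}(BG_{0};\bbR)^{\pi_{0}G}$, together with $I^{*}(G)=I^{*}(G_{0})^{\pi_{0}G}$ and $\pi_{0}G$-equivariance of the Chern--Weil map, and only then run the connected argument. A smaller point: the statement takes $BG=|NG|$ for the simplicial manifold $NG$, so ``a connection on the universal bundle,'' the curvature $\Omega$, and the class $[P(\Omega)]$ must be interpreted in Dupont's simplicial de~Rham framework (or via finite-dimensional smooth approximations of $EG\to BG$), and your well-definedness/interpolation argument should be carried out there; as phrased it treats $EG\to BG$ as an ordinary smooth bundle, which it is not.
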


\begin{definition}
    If $X,M$ are smooth manifolds, $E\rightarrow X\times M$ a smooth principal $G$-bundle, we say that a connection on $E$ is fiberwise flat if its restriction over $\{x\}\times M$ is a flat connection for each $x\in X$.
\end{definition}

\begin{definition}
    Given a flat principal $G$-bundle $E\xrightarrow{\pi} M$ with flat connection $\theta$ and basepoint $m_0\in M$, the holonomy representation at $x\in \pi^{-1}(m_0)$ is the map \[\pi_{1}(M,m_0) \xrightarrow{\mathcal{H}} G\] defined by $[\gamma]\rightarrow g$, where $g$ is the unique element satisfying $\widetilde{\gamma}(1)\cdot g= \widetilde{\gamma}(0)$ for $\widetilde{\gamma}$ the unique horizontal lift with respect to $\theta$ of $\gamma$ so that $\widetilde{\gamma}(0)=x$. 
\end{definition}

\begin{definition}\label{lbl1}
    If $\Gamma$ is a discrete group and $G$ a Lie group, an $X$-family of representations is a continuous map $\rho:X\rightarrow\mathrm{Hom}(\Gamma, G)$, where $\mathrm{Hom}(\Gamma, G)$ has the topology inherited as a subspace of $G^\Gamma$. If $X$ is a smooth manifold, an $X$-family is called smooth if for each $\gamma\in \Gamma$ the map $X\rightarrow G$ given by $x\mapsto \rho_{x}(\gamma):=\rho(x)(\gamma)$ is smooth. 
\end{definition}

Let $M$ be a connected manifold with universal cover $\widetilde{M}$. We can view $\widetilde{M}$ as the set of based homotopy classes of paths $l:[0,1]\rightarrow M$ with $l(1)=m_0$. If $\widetilde{m_{0}}$ is the constant path at $m_0$, then the map $q:(\widetilde{M},\widetilde{m_{0}})\rightarrow (M,m_0)$ given by evaluation at 0 is a right principal $\pi_{1}(M,m_0)$-bundle, with action given by $[l]\cdot [\alpha]=[l\cdot \alpha]$ where $l\cdot \alpha$ is the loop given by tracing out $l$ and then $\alpha$. \\

Now let $\rho$ be an $X$-family of representations of $\pi_{1}(M,m_0)$. We can construct a right principal $G$-bundle \[E_{\rho}(M)=X\times \widetilde{M}\times G/\pi_{1}(M,m_0)\xrightarrow{\pi} X\times M\] where we quotient by the $\pi_{1}$-action given by $(x,\widetilde{m},g)\cdot \alpha=(x,\widetilde{m}\cdot \alpha, \rho_{x}(\alpha)^{-1}g)$ and define $\pi([x,\widetilde{m},g])=(x,q(\widetilde{m}))$. If $\pi:X\times \widetilde{M}\times G\rightarrow E_{\rho}(M)$ is the quotient by the $\pi_{1}(M,m_0)$-action, then as the $\pi_{1}(M,m_0)$-action is smooth, free, and properly discontinuous there is a standard smooth structure on $E_\rho$ so that $\pi$ is a normal covering map (\cite{lee} Theorem 21.13). Throughout this paper we assume $E_\rho$ has this smooth structure. Some basic properties of this bundle construction are given in Baird and Ramras \cite{bairdramras}. \\ 

Note that if $X$ is a point, we get a flat bundle. By Theorem \ref{cartan} we see that flat bundles have trivial rational characteristic classes. For a more general algebraic proof see \cite{kambertondeur}.\\

We would now like to prove the existence of a fiberwise flat connection on bundles of the type $E_\rho$. First, we prove the following lemma.

\begin{lemma}\label{lbl3}
Let $K$ be a discrete group, $G$ a Lie group, $\pi:E\rightarrow B$ a smooth principal $G$-bundle. Assume $E$ and $B$ have a $K$-action that is free, commutes with the $G$-action on $E$, and induces covering maps $\psi:E\rightarrow E/K$ and $\bar{\psi}:B\rightarrow B/K$. Assume also that $\pi$ is $K$-equivariant. If $E\rightarrow B$ has a $K$-invariant connection $\theta$, so $\theta$ satisfies $\theta_{x}(w)=\theta_{x\cdot k}((\cdot k)_{*}w)$ for all $x\in E$, $w\in T_{x}(E)$, then $\theta$ descends to a connection on the $G$-bundle $\Bar{\pi}:E/K \rightarrow B/K$ with the $G$-action descended from $E$. If we further assume $F\rightarrow B$ is another $G$-bundle with such a $K$-action and $\phi:E\rightarrow F$ is a $G$-equivariant map that is also $K$-equivariant, so $\phi$ descends to a $G$-equivariant map $\overline{\phi}:E/K\rightarrow F/K$, then if a $K$-invariant connection on $E$ is the pullback by $\phi$ of a $K$-invariant connection on $F$, then the connection on $E/K$ is the pullback by $\overline{\phi}$ of the connection on $F/K$.
\end{lemma}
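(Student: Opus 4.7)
The strategy is to define $\bar\theta$ pointwise on $E/K$ as the pushforward of $\theta$ along the local diffeomorphism $\psi\colon E\to E/K$, and then verify the axioms of Definition \ref{def1} directly from those of $\theta$. Given $[x]\in E/K$ and $\bar w\in T_{[x]}(E/K)$, I would pick any representative $x\in\psi^{-1}([x])$ and let $w\in T_xE$ be the unique tangent vector with $\psi_*w=\bar w$ (unique because $\psi$ is a covering map, hence a local diffeomorphism), and set $\bar\theta_{[x]}(\bar w):=\theta_x(w)$. The only remaining ambiguity is the choice of representative $x$, and since any other choice has the form $x\cdot k$ with $k\in K$, this is exactly neutralized by the $K$-invariance hypothesis $\theta_x(w)=\theta_{x\cdot k}((\cdot k)_* w)$. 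Smoothness of $\bar\theta$ is automatic from $\psi$ being a local diffeomorphism.

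Before checking the connection axioms, I would record that the $G$-action on $E$ descends to $E/K$ because it commutes with the $K$-action, and that the $K$-equivariance of $\pi$, combined with $\psi$ and $\bar\psi$ being covering maps, makes $\bar\pi\colon E/K\to B/K$ into a principal $G$-bundle. The commutation of the two actions gives $\psi\circ R_g=R_g\circ\psi$ on $E$ and identifies the infinitesimal $G$-action via $\bar v_{[x]}=\psi_*\circ v_x$. Axiom (1) of Definition \ref{def1} for $\bar\theta$ then reduces to axiom (1) for $\theta$; applying differentials to the relation $\psi\circ R_g=R_g\circ\psi$ and invoking axiom (2) for $\theta$ yields axiom (2) for $\bar\theta$.

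For the naturality statement, assume $\theta_E=\phi^*\theta_F$ and let $\bar\theta_E,\bar\theta_F$ denote the descended connections. The identity $\psi_F\circ\phi=\bar\phi\circ\psi_E$ implies that for any lift $w\in T_xE$ of $\bar w\in T_{[x]}(E/K)$, the vector $\phi_*w\in T_{\phi(x)}F$ is a lift of $\bar\phi_*\bar w$ under $(\psi_F)_*$. Hence
\[
(\bar\phi^*\bar\theta_F)_{[x]}(\bar w)=(\bar\theta_F)_{\bar\phi([x])}(\bar\phi_*\bar w)=(\theta_F)_{\phi(x)}(\phi_*w)=(\phi^*\theta_F)_x(w)=(\theta_E)_x(w)=(\bar\theta_E)_{[x]}(\bar w),
\]
which gives $\bar\theta_E=\bar\phi^*\bar\theta_F$.

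The main obstacle is purely organizational: one must juggle two commuting actions and keep careful track of which hypothesis ($K$-invariance of $\theta$, $K$-equivariance of $\pi$, or commutation of actions) is being used at each step. The conceptually crucial point is that because $\psi$ is a local diffeomorphism, lifts of tangent vectors along $\psi_*$ are automatically unique, so the only ambiguity in pushing $\theta$ down to $E/K$ is the choice of preimage under $\psi$, and this is precisely the ambiguity that $K$-invariance removes; everything else is a formal chase.
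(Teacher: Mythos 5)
Your proposal is correct and follows essentially the same route as the paper's own proof: define $\bar\theta_{[x]}(\psi_* w):=\theta_x(w)$ using that $\psi$ is a local diffeomorphism, use $K$-invariance for well-definedness, verify both axioms of Definition \ref{def1} via the commutation of the $K$- and $G$-actions (in particular $v_{[x]}=\psi_*\circ v_x$ and $(R_g)_*$ commuting with $\psi_*$), and prove the naturality statement by the identical chain of equalities through $\psi_F\circ\phi=\bar\phi\circ\psi_E$. The only cosmetic difference is that the paper spells out the local triviality of $E/K\to B/K$ slightly more explicitly, which you summarize rather than verify.
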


\begin{proof}
    First note that the map $\Bar{\pi}:E/K\rightarrow B/K$ is a principal $G$-bundle. To see this, note the $K$-action commutes with the $G$-action, so the $G$-action on $E$ descends to a well-defined $G$-action on the quotient space. For the required locally trivial neighborhoods, consider an open neighborhood $U_{\alpha} \subset B$ on which $\overline{\psi}$ is a local homeomorphism as a covering map. Note then that $\overline{\psi}(U_\alpha)$ is evenly covered. Further restrict $U_{\alpha}$ to a neighborhood $V_{\alpha}$ over which $E$ is trivial. One can then check that $\overline{\psi}(V_\alpha)$ is a locally trivial neighborhood of $\overline{\pi}$ and every point in $B/K$ is contained in such a neighborhood.\\

Let $\theta$ be a $K$-invariant connection on $E\rightarrow B$, and let $[x]=\psi(x)$. Define a connection $\Bar{\theta}$ on $E/K$ by $\Bar{\theta}_{[x]}(\psi_{*}w):=\theta_{x}(w)$ for each $x\in E$, noting that $\psi$ is a local diffeomorphism so every vector in $T_{[x]}(E/K)$ can be given as $\psi_{*}w$ for some $w\in T_{x}E$. This is well-defined as we assume $\theta$ is $K$-invariant. To see that the first condition of Definition \ref{def1} is satisfied, note that because the $K$-action on $E$ commutes with the $G$-action 
    \begin{equation}\label{eq1}
    v_{x\cdot k}=(g\mapsto (x\cdot k)\cdot g)_{*}=(\cdot k)_{*} \circ (g\mapsto x\cdot g)_{*}=(\cdot k)_{*} \circ v_{x}.
    \end{equation}
    Because the $G$-action on $E/K$ is defined as $[x]\cdot g=[x\cdot g]$, then $v_{[x]}:\mathfrak{g}\rightarrow T_{[x]}(E/K)$ is equal to $\psi_{*}(v_x)$ so $\Bar{\theta}_{[x]}(v_{[x]})=\theta_{x}(v_x)=id$. $\Bar{\theta}$ satisfies the second condition of Definition \ref{def1} as the $K$ and $G$ actions on $E$ commute so by definition of the $G$-action on $E/K$ the differentials $(R_g)_{*}$ and $\psi_{*}$ commute, so
\begin{equation*}
\begin{split}
    R_g^{*}\Bar{\theta}_{[x]}(\psi_{*}w)  & =\Bar{\theta}_{[x\cdot g]}((R_g)_{*}\psi_{*}w)  =\Bar{\theta}_{[x\cdot g]}(\psi_{*}(R_g)_{*}w)  =\theta_{x\cdot g}((R_g)_{*}w)\\  & =Ad(g^{-1})\circ \theta_{x}(w) =Ad(g^{-1})\circ \Bar{\theta}_{[x]}(\psi_{*}w).
\end{split}
\end{equation*}
 
 Now assume we are additionally given the $G$-bundle $F\rightarrow B$ with connections as described above. Let $\alpha$ and $\beta$ be connections on $E$ and $F$ respectively, where $\alpha=\phi^{*}\beta$, and let $\psi_E:E\rightarrow E/K$ and $\psi_F:F\rightarrow F/K$ be the respective quotients by $K$. Then
\begin{equation*}
\begin{split}
(\overline{\phi})^{*}\overline{\beta}_{[x]}((\psi_E)_{*}v) & =\overline{\beta}_{\overline{\phi}[x]}(\overline{\phi}_{*}((\psi_E)_{*}v))=\overline{\beta}_{\overline{\phi}[x]}((\psi_F)_{*}(\phi_{*}v))=\beta_{\phi(x)}(\phi_{*}v)\\
     &=\alpha_{x}(v)=\overline{\alpha}_{[x]}((\psi_{E})_{*}v).
\end{split}
\end{equation*}
\end{proof}

\begin{remark}\label{rmk}
    If the connection $\theta$ on $E\rightarrow B$ in Lemma \ref{lbl3} is flat, then by definition the descended connection on $E/K$ is flat.
\end{remark}
\begin{proposition}\label{lbl2}
    If $X$ and $M$ are smooth manifolds, $G$ a Lie group, and \\
    $\rho:X\rightarrow \mathrm{Hom}(\pi_{1}(M,m_0),G)$ is a smooth family of representations, then the bundle $E_{\rho}(M)$ admits a fiberwise flat connection whose holonomy over $\{x\}\times M$, computed at $[x,\widetilde{m_0},e]\in E_{\rho}(M)$, is $\rho(x).$
\end{proposition}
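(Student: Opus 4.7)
The plan is to build the connection from local trivializations of $E_\rho(M)$ coming from sheets of the universal cover, patched together with a partition of unity on $M$. First I would choose an open cover $\{U_i\}$ of $M$ by chart neighborhoods that are evenly covered by $q\co \widetilde{M}\to M$, and for each $i$ a smooth section $s_i\co U_i\to \widetilde{M}$ of $q$. These produce trivializations $\phi_i\co X\times U_i\times G\cong E_\rho(M)|_{X\times U_i}$ via $(x,u,g)\mapsto [x,s_i(u),g]$. On each $X\times U_i\times G$ I would take the connection $\theta_i$ obtained by pulling back the Maurer--Cartan form on $G$ along projection to the third factor. Then $\theta_i$ vanishes in the $X$- and $U_i$-directions, and its restriction to any slice $\{x\}\times U_i\times G$ is the Maurer--Cartan connection, hence flat.

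The next step is to compare the $\theta_i$'s on overlaps. On $U_i\cap U_j$ the two sections differ by a locally constant $\alpha_{ij}(u)\in \pi_1(M,m_0)$, i.e.\ $s_i(u) = s_j(u)\cdot \alpha_{ij}(u)$, and unwinding the equivalence relation defining $E_\rho(M)$ shows the transition function is $(x,u,g)\mapsto (x,u,\rho_x(\alpha_{ij}(u))g)$. Applying the product-rule identity
\[
(fh)^*\theta^{\text{MC}} = \mathrm{Ad}(h^{-1})\,f^*\theta^{\text{MC}} + h^*\theta^{\text{MC}}
\]
for pullbacks of the Maurer--Cartan form to this transition then yields
\[
\theta_j - \theta_i = \mathrm{Ad}(g^{-1})\,\eta_{\alpha_{ij}},
\]
where $\eta_{\alpha_{ij}}$ is the pullback of $\theta^{\text{MC}}$ along the smooth map $x\mapsto \rho_x(\alpha_{ij})\co X\to G$. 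Crucially, $\eta_{\alpha_{ij}}$ is a 1-form only in the $X$-direction, so it annihilates every tangent vector to a fiber $\{x\}\times M$.

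I would then pick a smooth partition of unity $\{\psi_i\}$ on $M$ subordinate to $\{U_i\}$, pull it back to $X\times M$, and define $\theta := \sum_i \psi_i\theta_i$. Since the differences $\theta_j - \theta_i$ are $\mathrm{ad}(E_\rho(M))$-valued 1-forms, this convex combination is a well-defined smooth connection on $E_\rho(M)$. To verify fiberwise flatness I would restrict $\theta$ to $\{x\}\times M$: by the overlap computation the local connections $\theta_i|_{\{x\}\times U_i}$ agree pairwise on intersections, so they patch into a single flat connection $\overline{\theta}_x$ on $E_\rho(M)|_{\{x\}\times M}$ (the standard flat connection on $\widetilde{M}\times_{\rho(x)}G$, obtained by descending the Maurer--Cartan connection via Lemma \ref{lbl3} and Remark \ref{rmk}); since $\sum_i\psi_i = 1$, we get $\theta|_{\{x\}\times M} = \overline{\theta}_x$. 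The holonomy of $\overline{\theta}_x$ at $[x,\widetilde{m_0},e]$ is then the standard calculation for the flat bundle associated to $\rho(x)$: a horizontal lift of a loop $\gamma$ at $m_0$ starting from $[x,\widetilde{m_0},e]$ has the form $[x,\widetilde{\gamma}(t),e]$, and unwinding the equivalence $[x,\widetilde{m}\cdot\alpha,g] = [x,\widetilde{m},\rho_x(\alpha)g]$ at $t=1$ identifies the holonomy as $\rho(x)([\gamma])$.

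The technical heart of the argument is the Maurer--Cartan product-rule computation showing that the discrepancies between local trivializations lie entirely in the $X$-direction. A partition-of-unity combination of fiberwise-flat connections would typically fail to remain fiberwise flat, but since our local connections actually agree on each fiber $\{x\}\times M$, their weighted sum collapses fiberwise to the common flat connection.
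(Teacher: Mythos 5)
Your proposal is correct and follows essentially the same route as the paper: local trivializations of $E_\rho(M)$ coming from the universal cover, the Maurer--Cartan connection pulled back via projection to $G$ in each chart, a partition of unity pulled back from $M$, the observation that on overlaps the local connections differ only by a form in the $X$-direction (so they agree on each slice $\{x\}\times M$ and the fiber restriction is the connection descended via Lemma \ref{lbl3}, hence flat by Remark \ref{rmk}), and the same horizontal-lift holonomy computation. Your Maurer--Cartan product-rule formulation of the overlap comparison is just a repackaging of the paper's identity $\pi_3\circ(\alpha\cdot)_*=(L_{\rho_x(\alpha)})_*\circ\pi_3$.
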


\begin{proof}
    Let $U\subseteq M$ be an open neighborhood over which the universal cover $\widetilde{M}\xrightarrow{q}M$ is trivial. Denote a trivialization by $\widetilde{M}\rvert_{U}\xrightarrow{\phi_{U}}U\times \pi_{1}(M,m_0)$, where $\phi_U$ is equivariant with respect to the right multiplication $\pi_{1}(M,m_0)$-action. Then there is a $G$-bundle isomorphism \[E_{\rho}(M)\rvert_{X\times U}\xrightarrow{\widetilde{\phi}_{U}} X\times U\times G\] given by \[[x,\widetilde{m},g]\mapsto (x,\phi_{U}(\widetilde{m})_1, \rho_{x}(\phi_{U}(\widetilde{m})_2)g),\] where $\phi_{U}(\widetilde{m})_1$ and $\phi_{U}(\widetilde{m})_2$ denote the projections on the first and second coordinates respectively. Note that $\phi_{U}(\widetilde{m})_1=q(\widetilde{m})$. We then define a map $T(E_{\rho}(M)\rvert_{X\times U})\xrightarrow{\psi_U} \mathfrak{g}$ by the composition

   \begin{multline*}
    T_{[x,\widetilde{m},g]}(E_{\rho}(M))\rvert_{X\times U} \xrightarrow{(\widetilde{\phi}_U)_*}T_{\widetilde{\phi}_{U}([x,\widetilde{m},g])}(X\times U\times G)\cong T_{x}X\times T_{q(\widetilde{m})}U\times T_{g}G \\
    \xrightarrow{(L_{g^{-1}})_*\circ \pi_3} T_{e}(G),
   \end{multline*}

 where $\pi_3$ is projection onto the third coordinate. Let $\{\eta_{i}\}$ be a partition of unity subordinate to a numerable covering of $M$ by such open sets $U_i\subseteq M$ over which $\tilde{M}$ is trivial. Define $\theta$ on $E_{\rho}(M)$ by 
\[\theta(\vec{v})=\sum_{i}\eta_{i}(q(\widetilde{m}))\psi_{U_i}(\vec{v})\] for a vector $\vec{v}\in T_{[x,\widetilde{m},g]}(E_{\rho}(M)).$ It is routine to check that this is a connection. We now show that this connection is flat when restricted to the sub-bundle $E_{\rho}(M)\rvert_{\{x\}\times M}$ for each $x\in X$. To do this, we will show that it takes the same values on $E_{\rho}(M)\rvert_{\{x\}\times M}$ as the connection defined by Lemma \ref{lbl3} descended from the Maurer-Cartan connection on the trivial bundle \[\{x\}\times \widetilde{M}\times G\rightarrow \{x\}\times \widetilde{M},\] where we give $\{x\}\times \widetilde{M}\times G$ the $\pi_{1}(M,m_0)$-action defined by \[[x,\widetilde{m},g]\cdot \alpha=[x,\widetilde{m}\cdot \alpha, \rho_{x}(\alpha)^{-1}g].\] By Remark \ref{rmk}, this will give fiberwise flatness and will make the holonomy calculation simpler.\\

First, note that under the maps $(\widetilde{\phi}_{U})_*$ tangent vectors to $E_{\rho}(M)\rvert_{\{x\}\times U}$ map to tangent vectors of the form $(0,\vec{v_1},\vec{v_2})\in TX\times TU \times TG$. We now want to show that if $m=q(\widetilde{m})\in U\cap V\subseteq M$, then $\psi_{U}=\psi_{V}$ on vectors in $T_{[x,\widetilde{m},g]}(E_{\rho}(M)\rvert_{\{x\}\times M}$. Over a point $x\in U\cap V$, $\phi_V\circ \phi_{U}^{-1}:U\cap V\times \pi_{1}(M,m_0)\rightarrow U\cap V\times \pi_{1}(M,m_0)$ is equivariant with respect to the $\pi_{1}(M,m_0)$-action by right multiplication, so $\phi_V\circ \phi_{U}^{-1}$ is equivalent to the left multiplication map $\alpha \cdot (x,\beta)=(x,\alpha \beta)\in \{x\}\times \pi_{1}(M,m_0)$ for some $\alpha\in \pi_{1}(M,m_0)$. As  $\phi_V\circ \phi_{U}^{-1}$ is a homeomorphism and $\pi_{1}(M,m_0)$ is discrete then this $\alpha$ is the same for every point in some open neighborhood of $x\in U\cap V$. If we then define a left $\pi_{1}(M,m_0)$-action on $X\times M\times G$ by $\alpha \cdot (x,m,g)=(x,m,\rho_{x}(\alpha)g)$ then locally we get $\widetilde{\phi}_V=\alpha\cdot \widetilde{\phi}_U$ as $\rho_x$ is a homomorphism. Therefore, if $\vec{v}\in T_{[x,\widetilde{m},g]}E_{\rho}(M)\rvert_{\{x\}\times M}$ then  \[\psi_{V}(\vec{v})=(L_{g^{-1}\rho_{x}(\phi_{U}(\widetilde{m})_2)^{-1}\rho_{x}(\alpha)^{-1}})_{*}\circ \pi_{3}\circ (\alpha \cdot)_*\circ (\widetilde{\phi}_{U})_{*}(\vec{v})\] and on vectors of the form $(0,\vec{v_1},\vec{v_2})\in TX\times TM\times TG$, \[\pi_3\circ (\alpha \cdot)_*=(L_{\rho_{x}(\alpha)})_*\circ \pi_3\] so the composition simplifies to show $\psi_V=\psi_U$. Thus on vectors in \\ $T_{[x,\widetilde{m},g]}E_{\rho}(M)\rvert_{\{x\}\times M}$ the connection $\theta$ is given by $\theta=\psi_U$ for any $U\subseteq M$ as described above containing $q(\widetilde{m})$. This argument also shows that for a given $U$, different choices of trivialization $\phi_U$ give the same connection. It is straightforward to check that this is the same connection as that defined by Lemma \ref{lbl3}, so in particular is flat. \\

Now to show that this connection has the desired holonomy over $\{x\}\times M$. If $\gamma$ is a loop in $\{x\}\times M$ based at $m_0$, the horizontal lifts of this loop to $E_{\rho}(M)$ are of the form $[x,\widetilde{\gamma},e]$ where $\widetilde{\gamma}$ is a lift of the loop in $\widetilde{M}$. This is because the lift of this loop to a horizontal path at $(x,\widetilde{m_0},e) \in \{x\}\times \widetilde{M}\times G$ is of the form $(x,\widetilde{\gamma},e)$. Then note \[[x,\widetilde{\gamma}(1),e]\cdot \rho_{x}([\gamma])=[x,\widetilde{\gamma}(1),\rho_{x}([\gamma])]=[x,\widetilde{\gamma}(1)\cdot [\gamma],e]=[x,\widetilde{\gamma}(0),e]\] so we get the desired holonomy.  
\end{proof}

\begin{definition}
    A space $Y$ has finite type if $H_{j}(Y;\bbZ)$ is finitely generated for all $j\geq 0$.
\end{definition}

We now wish to prove the main result of this section. This theorem is a generalization of Theorem 3.5 in Baird and Ramras \cite{bairdramras} to principal $G$-bundles of the type $E_{\rho}$ for $G$ a compact Lie group. 

\begin{theorem}\label{lbl4}
    Let $Z$ be a path connected topological space, $G$ a compact Lie group, and let $$\rho:X\rightarrow \mathrm{Hom}(\pi_{1}(Z,z_0),G)$$ be a family of representations parametrized by a space $X$ with $H^{j}(X;\bbQ)=0$ for $j>m$. Assume that either $X$ or $Z$ has finite type. Then for $i>0$, the image of the homomorphism \[H^{2m+2i}(BG;\bbQ)\rightarrow H^{2m+2i}(X\times Z;\bbQ)\] induced by the classifying map of $E_{\rho}(Z)$ is zero. 
\end{theorem}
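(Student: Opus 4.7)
The plan is to combine the fiberwise flat connection of Proposition \ref{lbl2} with the Chern--Weil description of $H^{*}(BG;\bbQ)$ (Theorem \ref{cartan}) and run a Künneth bidegree argument on $X\times Z$: for $k=m+i$, every class in the image of $H^{2k}(BG;\bbQ)$ should be represented by a closed $2k$-form whose bidegree components all have $X$-degree at least $k>m$, forcing the class to vanish by the hypothesis on $X$.

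As a preliminary reduction, I would pass to the smooth category as in the proof of Theorem 3.5 of \cite{bairdramras}. Since we care only about the image of the finitely generated ring $H^{*}(BG;\bbQ)$ in a single rational cohomology group of $X\times Z$, one can replace $X$ by a smooth manifold and $\rho$ by a smooth family of representations without altering the relevant image. With this in place, Proposition \ref{lbl2} endows $E=E_{\rho}(Z)\to X\times Z$ with a connection $\theta$ whose curvature $\Omega$ vanishes on each slice $E\vert_{\{x\}\times Z}$, and by Theorem \ref{cartan} every class in the image of $H^{2k}(BG;\bbQ)$ has the form $[\overline{P}(\Omega)]$ for some $P\in I^{k}(G)$, where $\overline{P}(\Omega)$ denotes the descended $2k$-form on the base.

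The central computation is a bidegree analysis of $\Omega$ with respect to the product decomposition $T(X\times Z)=TX\oplus TZ$. Horizontal lifts of vectors in $TZ$ stay tangent to the slice $E\vert_{\{x\}\times Z}$, so the fiberwise flatness $i_{x}^{*}\Omega=0$ translates into the vanishing of the $(0,2)$-bidegree component of $\Omega$; the curvature is thus a sum of its $(2,0)$ and $(1,1)$ parts. A pairing count then shows that for any $P\in I^{k}(G)$ the form $P(\Omega)$ lies in bidegrees $(2k-j,j)$ with $0\leq j\leq k$, i.e.\ every Künneth bidegree component has $X$-degree at least $k$. Being closed, $\overline{P}(\Omega)$ therefore represents a class in the subspace $F^{k}H^{2k}(X\times Z;\bbQ)$ of the filtration associated to the $X$-degree on the de Rham complex.

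Under the finite type hypothesis on $X$ or $Z$, the Künneth theorem identifies $F^{k}H^{2k}(X\times Z;\bbQ)\cong\bigoplus_{p\geq k}H^{p}(X;\bbQ)\otimes H^{2k-p}(Z;\bbQ)$, and every summand vanishes since $p\geq k=m+i>m$ and $H^{p}(X;\bbQ)=0$ for $p>m$. The principal obstacle is the initial smoothing step: $X$ is merely a topological space while Chern--Weil requires a smooth model, so some care is needed either to approximate $X$ and $\rho$ smoothly, or to pass through a smooth universal parameter space, without disturbing the classifying map of $E_{\rho}(Z)$ in rational cohomology through degree $2m+2i$. Once this reduction is in hand, the bidegree count together with Theorem \ref{cartan} yields the theorem.
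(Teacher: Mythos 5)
Your route is essentially the paper's: reduce to the smooth setting via the Baird--Ramras argument, equip $E_{\rho}(Z)$ with the fiberwise flat connection of Proposition \ref{lbl2}, and use Cartan's theorem (Theorem \ref{cartan}) to represent every class in the image of $H^{2k}(BG;\bbQ)$ by a Chern--Weil form of the curvature. The only real divergence is the final vanishing step. The paper exploits the fact that the reduction (Baird--Ramras Lemma 3.4 together with Lemma \ref{lbl5} and the real algebraicity of $\mathrm{Hom}(\Gamma,G)$) replaces $X$ by closed manifolds of dimension at most $m$; since fiberwise flatness forces every term of $\Omega$ to contain at least one $dx_i$, any $P(\Omega)$ with $\deg P=k>m$ is identically zero as a form, and no cohomological bookkeeping is needed. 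You instead keep only the cohomological hypothesis $H^{>m}(X;\bbQ)=0$ and conclude via the bidegree count that $\overline{P}(\Omega)$ lies in $X$-filtration $\geq k$, then kill the class with the K\"unneth filtration. That argument is correct (your bidegree analysis of $\Omega$, with the $(0,2)$ part vanishing, matches the paper's local-coordinate statement), but it buys nothing here: the only mechanism available for producing a smooth model with $H^{>m}(\,\cdot\,;\bbQ)=0$ is precisely the dimension bound $\dim\leq m$ from Lemma 3.4, and once you have that bound the form itself dies and the spectral-sequence/filtration machinery is superfluous. Also, the ``principal obstacle'' you flag is not actually open: the paper resolves the smoothing of $\rho$ exactly by Lemma \ref{lbl5} and the claim that $\mathrm{Hom}(\Gamma,G)$ is a real algebraic set, and the passage from $X$, $Z$ to closed manifolds only needs the map to be injective on rational cohomology in the relevant degrees (using the finite type hypothesis for K\"unneth), not preservation of the classifying map itself. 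Finally, note the small point, handled in one line in the paper, that the Chern--Weil argument gives vanishing with $\bbR$ coefficients and one then passes back to $\bbQ$.
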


The method of proof will be similar, so we make use of the following results proved by Baird and Ramras.

\begin{lemma}[Baird and Ramras Lemma 3.4]
    Let $Y$ be a topological space. Then there exists a set of smooth, closed manifolds $\{M_k\}_{k\in K}$, and a map $f: M:= \coprod_{k} M_k\rightarrow Y$ such that for each $j\geq 0$, $$f^{*}: H^{j}(Y;\bbQ)\rightarrow H^{j}(M;\bbQ)\cong \prod_{k}H^{j}(M_k;\bbQ)$$ is injective. If $H^{j}(Y;\bbQ)=0$ for $j>m$, we can assume dim$(M_k)\leq m$ for all $k$. 
\end{lemma}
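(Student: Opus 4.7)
The plan is to reduce the statement to a bordism representability theorem and then dualize using universal coefficients over the field $\bbQ$.

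The key input is Thom's theorem on rational oriented bordism: for any topological space $Y$ and any $j\geq 0$, the natural transformation
\[
\Omega_j^{SO}(Y)\otimes\bbQ \maps H_j(Y;\bbQ), \qquad [N,g]\otimes 1 \longmapsto g_*[N],
\]
is surjective. Concretely, every class $\alpha\in H_j(Y;\bbQ)$ can be written, up to a nonzero rational scalar, as $g_*[N]$ for some smooth closed oriented $j$-manifold $N$ equipped with a continuous (and, after smooth approximation, smooth) map $g\co N\to Y$. This surjectivity is a consequence of the rational splitting of the Thom spectrum $MSO$ into a wedge of shifted rational Eilenberg--MacLane spectra, equivalently of the rational collapse of the Atiyah--Hirzebruch spectral sequence for oriented bordism.

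Given this, I would fix, for each $j\geq 0$, a $\bbQ$-basis $\{\alpha_k\}_{k\in K_j}$ of $H_j(Y;\bbQ)$, and for each $k\in K_j$ choose a smooth closed oriented $j$-manifold $M_k$ and a map $f_k\co M_k\to Y$ whose fundamental class maps to a nonzero rational multiple of $\alpha_k$. Setting $K:=\bigsqcup_{j\geq 0} K_j$, form the disjoint union $M:=\coprodmo_{k\in K} M_k$ and assemble the $f_k$ into a single map $f\co M\to Y$. Because $H_*(M;\bbQ)=\bigoplus_k H_*(M_k;\bbQ)$ and each basis element $\alpha_k$ of $H_*(Y;\bbQ)$ lies in the image of $(f_k)_*$ after rescaling by the relevant nonzero rational, the induced map $f_*\co H_*(M;\bbQ)\to H_*(Y;\bbQ)$ is surjective in every degree.

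Finally, since cohomology with field coefficients is the $\bbQ$-linear dual of homology, dualizing the surjection $f_*$ produces the required injection
\[
f^*\co H^j(Y;\bbQ)\maps H^j(M;\bbQ)\isom\prod_{k\in K}H^j(M_k;\bbQ).
\]
For the dimension bound, if $H^j(Y;\bbQ)=0$ for $j>m$ then by universal coefficients $H_j(Y;\bbQ)=0$ for $j>m$, so the index sets $K_j$ are empty in those degrees and every manifold appearing in the disjoint union has dimension at most $m$. The only nontrivial ingredient is Thom's representability theorem, and thus the main obstacle is simply citing (and possibly recalling) the correct form of it; the remaining steps are basic homological algebra and linear duality.
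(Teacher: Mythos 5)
Your argument is correct and is essentially the argument behind the result as cited (the paper does not reprove Baird--Ramras Lemma 3.4): Thom's theorem that every rational homology class is, up to a nonzero rational multiple, of the form $g_*[N]$ for a closed oriented smooth manifold $N$, followed by dualization over the field $\bbQ$ and the identification of the cohomology of a disjoint union with a product. The only (harmless) slip is the parenthetical about smoothly approximating $g\colon N\to Y$; since $Y$ is merely a topological space this has no meaning, but the lemma only requires the map to $Y$ to be continuous, so nothing is lost.
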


\begin{lemma}[Baird and Ramras Theorem 2.1]\label{lbl5}
    Let $i:X\xhookrightarrow{}\bbR^{n}$ be a real algebraic set and let $M$ be a smooth manifold. Assume that either $X$ or $M$ is compact. Then for every continuous map $\phi:M\rightarrow X$ there exists a map $\phi ':M\rightarrow X$ homotopic to $\phi$ such that $i\circ \phi '$ is smooth. 
\end{lemma}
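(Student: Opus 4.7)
The plan is to combine Whitney's classical approximation theorem with a neighborhood retraction onto $X$ afforded by the real algebraic structure.

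First I would reduce to the case where $M$ is compact. If $M$ is already compact there is nothing to do; if instead $X$ is compact but $M$ is not, I would exhaust $M$ by an increasing sequence of compact codimension-zero submanifolds with boundary $K_1\subset K_2\subset \cdots$ and build $\phi'$ by induction on $n$, smoothing each new layer relative to the previous one. This requires only a relative version of the compact case and introduces no genuinely new ideas beyond bookkeeping.

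With $M$ compact, I would apply Whitney's smooth approximation theorem to the continuous map $i\circ \phi\co M\to \bbR^n$, producing a smooth map $g\co M\to \bbR^n$ that is $C^{0}$-close to $i\circ \phi$. Because $X\subset \bbR^n$ is a closed semi-algebraic set, Lojasiewicz's triangulation theorem shows that $X$ is an ANR, so there is an open neighborhood $U\supset X$ in $\bbR^n$ together with a continuous retraction $r\co U\to X$ and a deformation of $U$ onto $X$ relative to $X$. Choosing $g$ sufficiently close to $i\circ \phi$ guarantees $g(M)\subseteq U$, and then $r\circ g\co M\to X$ is continuous and homotopic in $X$ to $\phi$ via the ambient straight-line homotopy pushed down by the deformation.

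The hard part is upgrading $r\circ g$ from a continuous map to one whose composition with $i$ is smooth, since the retraction $r$ is only continuous a priori. Here is where I would use the algebraic structure of $X$ in an essential way. The singular locus $\Sigma\subset X$ is itself a real algebraic subset of strictly smaller dimension, and on $X\setminus \Sigma$ there is a smooth tubular neighborhood in $\bbR^n$; alternatively, writing $X=V(f_1,\ldots,f_k)$ and setting $\sigma=f_1^2+\cdots+f_k^2$, the negative gradient flow of $\sigma$ gives a smooth retraction onto $X$ away from a proper semi-algebraic ``bad set'' $B\subset \bbR^n$. Using the semi-algebraic Sard theorem together with a dimension count, I would perturb $g$ (within its Whitney-approximation class, again by a small smooth deformation in $\bbR^n$) so that $g(M)$ is disjoint from $B$ and lies in the tubular neighborhood, whereupon $r\circ g$ is smooth. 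The main obstacle is precisely this step: the argument fails for a general closed ANR subset of $\bbR^n$ that is not semi-algebraic, so the input from real algebraic geometry (controlling the dimension and structure of the singular stratum, plus the availability of a polynomial defining function whose gradient flow is smooth off a lower-dimensional set) is indispensable.
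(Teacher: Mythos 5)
The paper itself contains no proof of this lemma: it is quoted directly from Baird and Ramras (their Theorem 2.1), so I can only judge your proposal on its own merits, and it has a genuine gap at precisely the step you flag as the hard one. The fatal problem is the claim that $g$ can be perturbed so that $g(M)$ misses the bad set $B$. That set necessarily contains the singular locus $\Sigma$ of $X$ together with the basin of points whose gradient trajectories limit into $\Sigma$, and a map $\phi$ can be forced through $\Sigma$ in a way no $C^0$-small perturbation can undo. Concretely, take $X=\{xy=0\}\subset\bbR^2$ and let $\phi\co\bbR\to X$ run in along the positive $x$-axis and out along the positive $y$-axis: any map $C^0$-close to $i\circ\phi$ whose image lies in a small neighborhood of $X\setminus\{0\}$ and avoids a neighborhood of the origin would have to jump between two components of that neighborhood, which is impossible for a connected domain. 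The semialgebraic Sard/dimension count does not rescue this, because $B$ need not have high codimension in $\bbR^n$ (for the crossing lines, the locus flowing into the origin under $-\nabla(xy)^2$ has codimension one). A second, independent defect is that the time-infinity endpoint map of the negative gradient flow of $\sigma=\sum f_i^2$ is generally not smooth even near smooth points of $X$: the \L{}ojasiewicz gradient inequality gives convergence of trajectories, but the limit map is typically only H\"older unless $\sigma$ is Morse--Bott along $X$, which cannot be arranged for arbitrary real algebraic sets (points where $X$ is a smooth submanifold need not be regular points of any system of defining polynomials).

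Your steps before that point --- reduction to compact $M$, Whitney approximation of $i\circ\phi$, the ANR neighborhood retraction from semialgebraic triangulability, and the resulting homotopy $r\circ g\simeq\phi$ --- are sound and do appear in the genuine argument. But the Baird--Ramras proof then does the opposite of evading $\Sigma$: it inducts on $\dim X$ using the filtration by iterated singular loci. One first homotopes the map so that, on a neighborhood of the preimage of $\Sigma$, it factors through $\Sigma$ (via a mapping-cylinder neighborhood supplied by the triangulation), smooths that piece by the inductive hypothesis applied to the lower-dimensional algebraic set $\Sigma$, and then smooths the remainder --- which now lands in the smooth locus $X\setminus\Sigma$, an honest submanifold of $\bbR^n$ --- by relative Whitney approximation in a tubular neighborhood. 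The singular locus must be absorbed by induction, not avoided by transversality; as written, your argument does not close.
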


Lemma \ref{lbl5} allows us to replace a family of representations $\rho$ by a smooth family $\rho '$ homotopic to $\rho$. We note that a more general approach to such a smoothing problem is given in \cite{fernando-ghiloni} Theorem 1.4. Now we can apply Lemma \ref{lbl2} to $E_{\rho '}\cong E_{\rho}$. To do this we are left to show that for some finitely generated group $\Gamma$ and compact Lie group $G$, Hom$(\Gamma,G)$ is a real algebraic set, and that the smoothness of \ref{lbl5} gives smoothness as in Definition \ref{lbl1}. 

\begin{claim}
    Let $\Gamma$ be a finitely generated discrete group and $G$ a compact Lie group. Then $\mathrm{Hom}(\Gamma,G)$ is a real algebraic set. 
\end{claim}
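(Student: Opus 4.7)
The plan is to exhibit $\mathrm{Hom}(\Gamma,G)$ as the zero set of a family of polynomial equations in some $\mathbb{R}^N$. The main ingredients are (i) the fact that a compact Lie group can be realized as a real algebraic subgroup of an orthogonal group, and (ii) the observation that imposing a group relation is a polynomial condition on the generators.

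First, I would use the Peter--Weyl theorem (or Mostow's theorem) to choose a faithful orthogonal representation $G\hookrightarrow O(n)\subseteq\mathbb{R}^{n^2}$, realizing $G$ as a closed subgroup. Because $O(n)$ is cut out by the polynomial equations $A A^{T}=I$, it is a real algebraic set, and $G$ itself, as a closed subgroup, is also defined inside $\mathbb{R}^{n^2}$ by finitely many polynomial equations (this is a classical theorem of Chevalley/Mostow for compact Lie groups). Crucially, on $O(n)$ the inversion map is $A\mapsto A^{T}$, which is linear, and multiplication is bilinear, so multiplication and inversion on $G$ are restrictions of polynomial maps on matrix entries.

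Next, pick a finite generating set $\gamma_{1},\ldots,\gamma_{k}$ for $\Gamma$ and let $F_k$ denote the free group on $k$ letters with surjection $F_k\twoheadrightarrow\Gamma$ having kernel $N$. Restriction to generators yields an injection
\[
\mathrm{Hom}(\Gamma,G)\hookrightarrow G^{k}\subseteq\mathbb{R}^{kn^{2}},
\]
whose image consists of those tuples $(g_{1},\ldots,g_{k})$ satisfying $w(g_{1},\ldots,g_{k})=I$ for every $w\in N$. For each such word $w$, the evaluation map $G^{k}\to G$, $(g_{1},\ldots,g_{k})\mapsto w(g_{1},\ldots,g_{k})$, is a composition of the polynomial multiplication and inversion maps above, hence is a polynomial map in the matrix entries. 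Thus the condition $w(g_{1},\ldots,g_{k})-I=0$ defines a real algebraic subset of $G^{k}$, and
\[
\mathrm{Hom}(\Gamma,G)=\bigcap_{w\in N}\bigl\{(g_{1},\ldots,g_{k})\in G^{k}: w(g_{1},\ldots,g_{k})=I\bigr\}.
\]
This is an intersection of zero sets of polynomials inside $\mathbb{R}^{kn^{2}}$, hence a real algebraic set; by Noetherianity of $\mathbb{R}[x_{1},\ldots,x_{kn^{2}}]$ only finitely many of these equations are needed to cut it out.

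The only mildly delicate point is justifying that a compact Lie group embeds as a real algebraic subgroup of $O(n)$ with polynomial multiplication and inversion; once that is in hand, the rest of the argument is just bookkeeping about words in the generators. I would cite the standard fact (e.g.\ from Onishchik--Vinberg) that compact Lie groups are linear algebraic over $\mathbb{R}$ rather than reprove it.
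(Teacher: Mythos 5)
Your proposal is correct and follows essentially the same route as the paper: embed $G$ as a real algebraic set via a faithful representation (the paper cites Onishchik--Vinberg for compact subgroups of $GL_n(\bbC)$ being real algebraic), restrict homomorphisms to a finite generating set to get $\mathrm{Hom}(\Gamma,G)\subseteq G^{s}$, and observe that the group relations become polynomial conditions on the matrix entries. Your version is in fact a bit more careful on two points the paper glosses over --- using an orthogonal embedding so that inversion is literally polynomial, and invoking Noetherianity to reduce the possibly infinite family of word equations to finitely many --- but these are refinements of the same argument, not a different one.
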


\begin{proof}
    First we note that every compact Lie group is a real algebraic set. This follows because every compact Lie group has a faithful representation (see \cite{repscompactliegps} Ch.3 Theorem 4.1), so the image of this representation is a compact subgroup of $GL_{n}(\bbC)$ for some $n$. Then by Theorem 3.4.5 in Onishchik and Vinberg \cite{o-v} we get that the image is  real algebraic. To see that Hom$(\Gamma,G)$ is real algebraic, note it is a subset of $G^s$ under the restriction map Hom$(\Gamma,G)\rightarrow G^s$ given by $\phi \mapsto (\phi(g_1),\dots,\phi(g_s))$, where $\{g_1,\dots,g_s\}$ is the generating set of $\Gamma$. Elements in the image of the restriction map are $s$-tuples of elements in $G$ whose components satisfy relations given by passing the group relations on $\Gamma$ through some homomorphism to $G$. As $G^s$ is real algebraic, the embedding $G^s\subseteq \bbR^{2n^2}$ then defines Hom$(\Gamma,G)$ as a subset of $\bbR^{2n^2}$ given by points satisfying polynomial relations.
\end{proof}

Then, given some $X$-family of representations $\rho$ into Hom$(\Gamma,G)$, Lemma \ref{lbl5} lets us homotope it to a map $X\rightarrow$ Hom$(\Gamma,G)$ for which the composite \[X\rightarrow \text{Hom}(\Gamma,G)\rightarrow G^{s}\xhookrightarrow{}\bbR^{2n^2}\] is smooth. As closed subgroups of Lie groups can be given a compatible smooth structure making them embedded Lie groups, the embedding of $G^s$ as a real algebraic subset of $\bbR^{4n^2}$ is actually an embedding as a smooth submanifold, first by embedding as a Lie subgroup of $GL_{n}(\bbC)$ and then embedding this as a smooth submanifold of $\bbR^{4n^2}.$ Thus \[X\rightarrow \text{Hom}(\Gamma,G)\rightarrow G^{s}\xhookrightarrow{}\bbR^{2n^2}\] is smooth if and only if the composition $X\rightarrow G^s$ is smooth, and as $\Gamma$ is discrete this is equivalent to saying that $\rho$ is homotopic through $X$-families of representations to a smooth $X$-family.

\begin{proof}[Proof Of Theorem \ref{lbl4}]
First we reduce to the case in which $X$ and $Z$ are closed, connected, smooth manifolds, using the same argument as Baird and Ramras. In particular note that we have shown we can assume without loss of generality that $\rho$ is a smooth $X$-family.\\

Lemma \ref{lbl2} then guarantees the existence of a fiberwise flat connection $\theta$ on $E_{\rho}(Z)$. Choose local coordinates $\{x_1,\dots,x_m,z_1,\dots, z_p\}$ of $X\times Z$ so that each slice $\{x\}\times Z$ is given locally by setting the $x_i$ coordinates as constants. Let $\{\gamma_1,\dots,\gamma_k\}$ be a basis of $\mathfrak{g}$. Then the connection $\theta$ can be given locally by $\theta=\omega_1 \gamma_1+\dots+\omega_k \gamma_k$ where each $\omega_i$ is an $\bbR$-valued 1-form on $E_\rho(Z)$, which are locally given in the span of $\{dx_1,\dots,dx_m,dz_1,\dots, dz_p\}$. Then locally \[\theta\wedge\theta=\sum_{1\leq i\leq j \leq k}(\omega_i \wedge \omega_j)\gamma_i \otimes \gamma_j\] so \[\Omega=d(\omega_1 \gamma_1+\dots \omega_k \gamma_k)+\frac{1}{2}\sum_{1\leq i\leq j \leq k}(\omega_i \wedge \omega_j)[\gamma_i, \gamma_j]\] Fiberwise flatness then gives that the coefficients of $\Omega$ are in the span of the 2-forms of the form $dx_i\wedge dx_j$ and $dx_i\wedge dz_j$. Then any Chern-Weil form given by a homogeneous polynomial of degree $>m$ in the curvature must be 0. By a theorem of Cartan (see \cite{dupont} Theorem 8.1) every characteristic class of $E_{\rho}(Z)$ is given by the de Rham cohomology class of such Chern-Weil forms, so every characteristic class in $H^{2m+2i}(X\times Z,\bbR)$ is zero for $i>0$. It follows immediately that this is also true for rational coefficients. 

\end{proof}

\begin{remark}
If one considers the foliation of $X\times M$ by leaves $\{x\}\times M$, then the horizontal vectors of the fiberwise flat connection on $E_\rho \rightarrow X\times M$ constructed in Lemma \ref{lbl2} define a flat partial connection as defined in \cite{kambertondeurfoliated}. The proof of Theorem \ref{lbl4} can then be seen to be a special case of \cite{kambertondeurfoliated} Corollary 4.30.
\end{remark}
\section{Connectivity of the Space of Flat Unitary Connections}

Let $M$ be a closed, connected, smooth manifold with fundamental group $\Gamma=\pi_1(M,m_0)$. Let $f:M\rightarrow B\Gamma$ be a map classifying the universal cover $\widetilde{M}\rightarrow M$ that takes the basepoint $m_0$ to the canonical basepoint of $B\Gamma$. Then we can consider the composition \[\text{Hom}(\Gamma,G)\xrightarrow{B}\text{Map}_{*}(B\Gamma,BG)\xrightarrow{f^{*}}\text{Map}_{*}(M,BG).\]
We want to apply the characteristic class restrictions of Section 2 to study this composite.

 By studying the first map in this composition, Baird and Ramras \cite{bairdramras} showed that for $U(n)$-bundles over closed, smooth, aspherical manifolds $\mathcal{A}_{\mathrm{flat}}(E)$ is highly disconnected. For example, the space of flat connections for a flat $U(n)$-bundle, $n\geq 2$, over an orientable aspherical 3-manifold has infinitely many path components. This is in contrast to work by Ramras \cite{ramras-yangmills} that showed the space of flat connections $\mathcal{A}_{\mathrm{flat}}(E)$ for a principal $U(n)$-bundle over a surface is highly connected, depending on the genus of the surface and rank of the bundle.\\

We will show that expanding the arguments of Baird and Ramras allows us to drop the asphericity condition on the base manifold, and we gain similar results for other classical Lie groups. \\

First we note that the map $f^{*}\circ B$ is closely related to the $E_\rho$ bundles that we explored earlier:

\begin{proposition}\label{lbl6}
    Let $G$ be a Lie group, $X$ be any space. Then any $G$-bundle of the form $E_{\rho}\rightarrow X\times M$ is classified by the composition $$X\times M \xrightarrow{\rho \times f}\mathrm{Hom}(\Gamma,G)\times B\Gamma\xrightarrow{B\times id}\mathrm{Map}_{*}(B\Gamma,BG)\times B\Gamma\xrightarrow{ev} BG$$ where $\Gamma=\pi_{1}(M,m_0)$ and $f:M\rightarrow B\Gamma$ classifies the universal cover. 
\end{proposition}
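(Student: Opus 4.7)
The plan is to identify $E_{\rho}(M)$ as the pullback of a single ``universal'' $G$-bundle over $\mathrm{Hom}(\Gamma,G) \times B\Gamma$, whose classifying map is precisely $ev \circ (B \times \mathrm{id})$. Pulling this universal bundle back along $\rho \times f$ will then classify $E_\rho(M)$ via the given composition. The argument reduces to two steps: a pointwise classification for each single representation, and a parametrized version checking that the construction assembles continuously in $\sigma$.

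For the pointwise step, I would invoke the standard fact that for any $\sigma \in \mathrm{Hom}(\Gamma, G)$, the associated bundle $E\Gamma \times_\sigma G \to B\Gamma$ is classified by $B\sigma \colon B\Gamma \to BG$. Since $f \colon M \to B\Gamma$ classifies the universal cover, $f^* E\Gamma \cong \widetilde{M}$ as principal $\Gamma$-bundles, and hence $f^*(E\Gamma \times_\sigma G) \cong \widetilde{M} \times_\sigma G$, which is exactly $E_\sigma(M)$ in the case when $X$ is a point. This matches the composition $ev \circ (B \times \mathrm{id}) \circ (\sigma \times f) = B\sigma \circ f$ and thereby settles the case $X = *$.

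For the parametrized step, I would construct the universal bundle $\mathcal{E} \to \mathrm{Hom}(\Gamma, G) \times B\Gamma$ as the quotient of $\mathrm{Hom}(\Gamma, G) \times E\Gamma \times G$ by the $\Gamma$-action $(\sigma, e, g) \cdot \gamma = (\sigma, e \gamma, \sigma(\gamma)^{-1} g)$. By construction $\mathcal{E}$ restricts over each slice $\{\sigma\} \times B\Gamma$ to $E\Gamma \times_\sigma G$, so by the pointwise step its classifying map agrees slicewise with $ev \circ (B \times \mathrm{id})$. Pulling $\mathcal{E}$ back along $\rho \times f$ then yields $X \times \widetilde{M} \times G$ modulo the action $(x, \widetilde{m}, g) \cdot \gamma = (x, \widetilde{m} \gamma, \rho_x(\gamma)^{-1} g)$, which is precisely the defining quotient of $E_\rho(M)$.

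The main obstacle is promoting the slicewise identification into a genuine bundle isomorphism $\mathcal{E} \cong (ev \circ (B \times \mathrm{id}))^* EG$ over all of $\mathrm{Hom}(\Gamma, G) \times B\Gamma$. I would handle this by working in a functorial model such as the simplicial bar construction $B(-) = |N(-)|$, in which the map $B \colon \mathrm{Hom}(\Gamma, G) \to \mathrm{Map}_*(B\Gamma, BG)$ is manifestly continuous and the required bundle isomorphism reduces to comparing two simplicial quotient maps on representatives, which is essentially tautological. Once this model-specific identification is in place, the proposition follows by pulling back $EG \to BG$ along $ev \circ (B \times \mathrm{id}) \circ (\rho \times f)$ and invoking the universal property of classifying maps.
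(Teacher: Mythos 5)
Your proposal is correct and follows essentially the same route as the paper, which simply defers to Baird--Ramras (Lemma 4.1): in the simplicial bar-construction model one writes down the $G$-equivariant map $[\sigma,e,g]\mapsto E\sigma(e)\cdot g$ into $EG$ covering $(\sigma,b)\mapsto B\sigma(b)=ev\circ(B\times \mathrm{id})$, which is exactly the ``tautological'' comparison you invoke to upgrade the slicewise identification (and is the same map $u_{\mathcal{H}(A)}$ used later in the proof of Theorem \ref{lbl9}). Pulling back along $\rho\times f$ and using $f^{*}E\Gamma\cong\widetilde{M}$ then recovers $E_{\rho}(M)$ just as you describe, so your argument matches the intended one.
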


\begin{proof}
   This proof is similar to that of Lemma 4.1 in Baird and Ramras \cite{bairdramras}.
\end{proof}

The main theorem for the section is the following:

\begin{theorem}\label{lbl7}
Let $M$ be a closed, connected, smooth manifold of dimension d with fundamental group $\Gamma=\pi_{1}(M,m_0)$. Let $f:M\rightarrow B\Gamma$ be a map classifying the universal cover $\widetilde{M}\rightarrow M$ that takes the basepoint $m_0$ to the canonical basepoint of $B\Gamma$. Let $\beta_{k}(M)=\mathrm{Rank}(H^{k}(M;\bbQ))$. Then for each $\rho\in \mathrm{Hom}(\Gamma,\text{GL}_{n}(\bbC))$ and each $m\geq 0$, the induced map on homotopy groups $$(f^{*}\circ B)_{*}:\pi_{m}(\mathrm{Hom}(\Gamma,\text{GL}_{n}(\bbC)),\rho)\rightarrow \pi_{m}(\mathrm{Map}_{*}(M,B\text{GL}_{n}(\bbC)),B\rho \circ f)$$ satisfies $\mathrm{Rank}(\mathrm{coker}((f^{*}\circ B)_{*}))\geq \sum_{i=1}^{\infty}\beta_{2i+m}(M)$ when $n\geq (m+d)/2$.\\
\end{theorem}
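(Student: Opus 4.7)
The plan is to transfer the problem to $U(n)$ via $BGL_{n}(\bbC)\simeq BU(n)$, identify the target $\pi_{m}$ rationally with a direct sum of cohomology groups via the Chern character in the stable range, and then use Theorem \ref{lbl4} to force the image to miss the top half of this decomposition.

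First I would pass from $GL_{n}(\bbC)$ to its maximal compact subgroup $U(n)$ on the classifying space side, using $BU(n)\simeq BGL_{n}(\bbC)$ to identify $\mathrm{Map}_{*}(M,BGL_{n}(\bbC))\simeq \mathrm{Map}_{*}(M,BU(n))$. The hypothesis $n\geq (m+d)/2$ enters via the fibration $U/U(n)\to BU(n)\to BU$: Bott stability makes $U/U(n)$ be $2n$-connected, so the fiber $\mathrm{Map}_{*}(M,U/U(n))$ of the induced fibration on mapping spaces is $(2n-d)$-connected (as $\dim M = d$), yielding $\pi_{m}(\mathrm{Map}_{*}(M,BU(n)))\cong \pi_{m}(\mathrm{Map}_{*}(M,BU))$ exactly in the stated range. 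Rationally, the Chern character then identifies
\[\pi_{m}(\mathrm{Map}_{*}(M,BU))\otimes \bbQ\;\cong\; \widetilde{KU}^{-m}(M)\otimes \bbQ\;\cong\; \bigoplus_{k\geq 1}\widetilde{H}^{2k-m}(M;\bbQ),\]
with a class $\alpha$ recorded by the Chern character components $\mathrm{ch}_{k}(\tilde{\alpha})$ of its based adjoint $\tilde{\alpha}\colon \Sigma^{m}M\to BU$ (basepoints shift freely since $BU$ is an infinite loop space).

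Next, I would invoke Proposition \ref{lbl6} and Theorem \ref{lbl4}. For $[\alpha]\in \pi_{m}(\mathrm{Hom}(\Gamma,GL_{n}(\bbC)),\rho)$, after smoothing $\alpha$ via Lemma \ref{lbl5}, Proposition \ref{lbl6} identifies the adjoint of $(f^{*}\circ B)\circ \alpha$ with the classifying map of $E_{\alpha}\to S^{m}\times M$. The Chern-Weil argument proving Theorem \ref{lbl4} only uses fiberwise flatness of $\theta$ to restrict the curvature to the $dx_{i}\wedge dx_{j}$ and $dx_{i}\wedge dz_{j}$ directions, so it applies verbatim to the $GL_{n}(\bbC)$-bundle $E_{\alpha}$ (via $H^{*}(BGL_{n}(\bbC);\bbQ)\cong H^{*}(BU(n);\bbQ)$), forcing the image of $H^{2k}(BU(n);\bbQ)$ in $H^{2k}(S^{m}\times M;\bbQ)$ to vanish for every $k>m$. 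Under the K\"unneth splitting
\[H^{2k}(S^{m}\times M;\bbQ)\cong H^{2k}(M;\bbQ)\oplus H^{2k-m}(M;\bbQ)\otimes H^{m}(S^{m};\bbQ),\]
and additivity of the Chern character (noting that the $\{s_{0}\}\times M$ slice of $E_{\alpha}$ is flat and so has trivial rational Chern character), the projection of $\mathrm{ch}_{k}((f^{*}\circ B)_{*}[\alpha])$ onto the summand $\widetilde{H}^{2k-m}(M;\bbQ)$ therefore vanishes for each $k>m$.

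Combining, the image of $(f^{*}\circ B)_{*}\otimes \bbQ$ lies inside $\bigoplus_{k=1}^{m}\widetilde{H}^{2k-m}(M;\bbQ)$, so the cokernel contains the complementary summand $\bigoplus_{k>m}\widetilde{H}^{2k-m}(M;\bbQ)=\bigoplus_{i\geq 1}\widetilde{H}^{m+2i}(M;\bbQ)$, whose rank is $\sum_{i\geq 1}\beta_{m+2i}(M)$. The step I expect to demand the most care is the alignment of the K\"unneth summand $H^{2k-m}(M;\bbQ)\otimes H^{m}(S^{m};\bbQ)$ of $H^{2k}(S^{m}\times M;\bbQ)$ with the Chern character component $\widetilde{H}^{2k-m}(M;\bbQ)$ in the identification above, which amounts to checking that the based adjunction (after shifting the basepoint of $\mathrm{Map}_{*}(M,BU)$ from $B\rho\circ f$ to the constant map using the loop-space $H$-structure on $BU$) carries the Chern character into the reduced K\"unneth component.
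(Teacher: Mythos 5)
Your argument is correct, and at bottom it is the same strategy the paper invokes by deferring to Baird--Ramras: combine Proposition \ref{lbl6} with the Chern--Weil vanishing of Theorem \ref{lbl4} applied to the spherical family $E_{\alpha}\to S^{m}\times M$, use the stability hypothesis $n\geq (m+d)/2$ to identify $\pi_{m}$ of the mapping space with $\widetilde{\mathrm{K}}{}^{0}(\Sigma^{m}M)$, and detect the cokernel rationally through the Chern character. The only genuine difference is the packaging: the paper (following \cite{bairdramras}) uses Lemma \ref{lbl8} to produce explicit classes in $\widetilde{\mathrm{K}}{}^{0}(S^{m}\wedge M)$ with a single prescribed Chern class and shows their span meets the image trivially, whereas you decompose the entire target as $\bigoplus_{k}\widetilde{H}^{2k-m}(M;\bbQ)$ and show the image lies in the summands with $k\leq m$; these are dual formulations of the same computation (Lemma \ref{lbl8} is essentially the surjectivity half of the rational Chern character on a suspension, using that cup products vanish there), and your version reads slightly more cleanly here, while the paper's version is the one that transfers verbatim to the $\mathrm{KO}$/Pontryagin argument of Section 4. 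Two points you should state explicitly rather than gesture at: first, Theorem \ref{lbl4} is stated for compact $G$, and the parenthetical $H^{*}(B\GL_{n}(\bbC);\bbQ)\cong H^{*}(BU(n);\bbQ)$ does not by itself transfer it to the $\GL_{n}(\bbC)$-bundle $E_{\alpha}$; what you need (and what is true, and is exactly the content of Baird--Ramras' Theorem 3.5, which the paper's sketch implicitly relies on) is that Chern--Weil theory computes the rational Chern classes of a smooth $\GL_{n}(\bbC)$-bundle from an arbitrary connection, together with the fact that $\Hom(\Gamma,\GL_{n}(\bbC))$ is a real algebraic set so that Lemma \ref{lbl5} applies (the paper's Claim only treats compact $G$). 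Second, for $m=0$ the target $\pi_{0}(\Map_{*}(M,B\GL_{n}(\bbC)))$ is only a pointed set, and it is precisely your stable identification with $\widetilde{\mathrm{K}}{}^{0}(M)$ in the range $n\geq d/2$ that gives it a group structure making ``rank of the cokernel'' meaningful; that deserves a sentence.
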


The proof is analogous to that of Theorem 1.2 in Baird and Ramras \cite{bairdramras}, so we do not give the details. The idea behind the proof is that there are cohomology classes which we know by Theorem \ref{lbl4} are not characteristic classes of $E_\rho$ bundles. If we can find bundles with those characteristic classes, then we can build up a non-trivial cokernel of the map $(f^{*}\circ B)_{*}$. To do this, we need the following lemma from Baird and Ramras. A detailed proof for the $O(n)$ case can be found in the next section.

\begin{lemma}[Baird and Ramras \cite{bairdramras} Lemma 4.2]\label{lbl8}
    Let $X$ be a finite CW complex. Let $c_i$ denote the $i$th Chern class in rational cohomology. Given $m>0$ and $x\in H^{2i}(S^m \wedge X;\bbQ)$, there exists a class $\phi \in K^{0}(S^{m}\wedge X)$ such that $c_{i}(\phi)=qx$ for some non-zero rational number $q\in \bbQ^{*}$, and $c_{j}(\phi)=0$ for $j\neq i$. 
\end{lemma}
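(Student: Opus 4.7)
The plan is to leverage the rational isomorphism $\mathrm{ch}\co \tilde K^{0}(Y)\otimes \bbQ\xrightarrow{\sim}\tilde H^{\mathrm{even}}(Y;\bbQ)$ given by the Chern character (valid for any finite CW complex $Y$, e.g.\ by the rational collapse of the Atiyah--Hirzebruch spectral sequence), together with the fact that reduced cup products vanish on any suspension. Applying this with $Y = S^{m}\wedge X$ reduces the problem to realizing a prescribed Chern character component, and then translating back via Newton's identities, which simplify drastically on a suspension. The $i=0$ case is trivial (take $\phi$ to be $0$ or a rank-one trivial bundle, depending on $x$), so assume henceforth $i\geq 1$.

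Concretely, the given class $x\in H^{2i}(S^{m}\wedge X;\bbQ)$ lies in $\tilde H^{2i}(Y;\bbQ)\subseteq \tilde H^{\mathrm{even}}(Y;\bbQ)$, concentrated in degree $2i$. Let $\phi_{\bbQ}:=\mathrm{ch}^{-1}(x)\in \tilde K^{0}(Y)\otimes \bbQ$; by construction $\mathrm{ch}_{i}(\phi_{\bbQ})=x$ and $\mathrm{ch}_{j}(\phi_{\bbQ})=0$ for $j\neq i$. Choose a positive integer $N$ with $\phi:=N\phi_{\bbQ}\in \tilde K^{0}(Y)\subseteq K^{0}(Y)$, so that $\mathrm{ch}_{i}(\phi)=Nx$ and $\mathrm{ch}_{j}(\phi)=0$ for $j\neq i$.

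To convert this into Chern-class data, write $p_{k}=k!\,\mathrm{ch}_{k}$ for the $k$th Newton power sum in the formal Chern roots. Newton's identity $p_{k}=c_{1}p_{k-1}-c_{2}p_{k-2}+\cdots+(-1)^{k-1}kc_{k}$ applied to $\phi$ collapses on the suspension $Y$: every term other than the last is a product of two positive-degree reduced classes and hence vanishes. This yields $c_{k}(\phi)=(-1)^{k-1}(k-1)!\,\mathrm{ch}_{k}(\phi)$ for all $k\geq 1$, so $c_{i}(\phi)=(-1)^{i-1}(i-1)!\,N\,x=:q\,x$ with $q\in\bbQ^{*}$ and $c_{j}(\phi)=0$ for $j\neq i$, as required. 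The only mild subtlety is verifying the Newton-identity collapse on a suspension; beyond that, the lemma is a direct consequence of the Chern character being a rational isomorphism.
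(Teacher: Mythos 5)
Your proof is correct and is essentially the argument used here: the paper cites Baird--Ramras for this lemma, and its own proof of the real analogue (Lemma \ref{lbl11}, via Proposition \ref{lbl12}) runs on exactly your two ingredients --- the rational Chern character isomorphism for a finite complex and the vanishing of cup products on the suspension $S^m\wedge X$, which collapses the Newton identities to $c_k=(-1)^{k-1}(k-1)!\,\mathrm{ch}_k$. The only cosmetic point is that the lemma is intended for $i\geq 1$, so your $i=0$ aside is unnecessary (and with the convention $c_0\equiv 1$ it is vacuous anyway).
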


We now want to relate Theorem \ref{lbl7} to the space of flat connections for a unitary bundle (or more generally a principal $G$-bundle for $G$ a compact Lie group). Let $M$ be a closed, connected, smooth $d$-dimensional manifold. Let $E\rightarrow M$ be a principal $G$-bundle over $M$ with chosen basepoint $e_0$ that projects to chosen basepoint $m_0$. We consider connections on $E$ of Sobolev class $L_{k}^{p}$ for some fixed constants $p\in \bbR$, $k\in \bbN$ with min$(d/2,4/3)<p<\infty,k\geq 2,$ and $kp>d$. We also consider gauge transformations of $E$ of Sobolev class $L^{p}_{k+1}$. The details behind these restrictions on the constants can be found in Ramras \cite{ramras-yangmills} Section 3.2. They allow us to say that the based gauge group $\mathcal{G}_{0}(E)$ acts continuously on the space of flat connections $\mathcal{A}_{\mathrm{flat}}(E)$, with $\mathcal{A}_{\mathrm{flat}}(E)/\mathcal{G}_{0}(E)\cong \text{Hom}(\pi_{1}(M),G)$. The quotient can be identified with the holonomy representation $\mathcal{H}$ sending a flat connection to the holonomy representation of that connection at basepoint $e_0$.

\begin{proof}[Proof of Theorem \ref{lbl9}]
    Let $\Gamma=\pi_{1}(M,m_0)$. We will consider the following commutative diagram:

\begin{equation}\label{dgrm1}
\xymatrix{\mathcal{A}_{\mathrm{flat}}(E)  \ar[rr]^-{\mathcal{T}} \ar[d]^{\mathcal{H}}&& \text{Map}_{*}^{G}(E,EG)\ar[d]^{q} \\
\text{Hom}^{E}(\Gamma,G) \ar[r]^-{B}& \text{Map}_{*}^{E}(B\Gamma,BG) \ar[r]^-{f^*}& \text{Map}_{*}^{E}(M,BG) \\}
\end{equation}

Here $\mathcal{H}$ is the holonomy representation, $\text{Map}_{*}^{E}(M,BG)$ is the path component of the based mapping space consisting of maps classifying $E$, $\text{Hom}^{E}(\Gamma,G)$ is the preimage of $f^{*}\circ B$, and $\text{Map}_{*}^{E}(B\Gamma,BG)$ is the preimage of $f^{*}$. To justify the composition $f^{*}\circ B$, note that if we are given a flat connection $A\in \mathcal{A}_{\mathrm{flat}}(E)$ then $\mathcal{H}(A):\Gamma \rightarrow G$ (computed at chosen basepoint $e_0$ over $m_0$) induces a $G$-equivariant map $E\Gamma \xrightarrow{E(\mathcal{H}(A))} EG$ between simplicial models that covers the functorial map $B\Gamma \xrightarrow{B(\mathcal{H}(A))} BG$. Then we get a pullback diagram 

\begin{equation*}
\xymatrix{E_{\mathcal{H}(A)}(M)  \ar[r]^-{\tilde{f}_{\mathcal{H}(A)}} \ar[d]& E_{\mathcal{H}(A)}(B\Gamma) \ar[r]^-{u_{\mathcal{H}(A)}} \ar[d]& EG\ar[d] \\
M \ar[r]^-{f}& B\Gamma \ar[r]^-{B(\mathcal{H}(A))}& BG \\}
\end{equation*}

where $\tilde{f}_{\mathcal{H}(A)}([\tilde{m},g])=[\tilde{f}(\tilde{m}),g]$ and $u_{\mathcal{H}(A)}([e,g])=E(\mathcal{H}(A))(e)\cdot g$. As shown in Lemma \ref{lbl2} there is a canonical flat connection $A_{\mathcal{H}(A)}$ on $E_{\mathcal{H}(A)}(M)$ whose holonomy at the chosen basepoint $[\tilde{m}_0,e]$ is $\mathcal{H}(A)$. There exists a canonical isomorphism of $G$-bundles $E\xrightarrow{\phi_{A}}E_{\mathcal{H}(A)}(M)$ so that $\phi_{A}(e_0)=[\tilde{m}_0,e]$ and $\phi_{A}^{*}(A_{\mathcal{H}(A)})=A$ (see Ramras \cite{ramras-stablemoduli} Prop 8.4 for details on this isomorphism). We then define $\mathcal{T}$ as the composition $\mathcal{T}(A)=u_{\mathcal{H}(A)}\circ \tilde{f}_{\mathcal{H}(A)}\circ \phi_{A}$. 
The continuity of $\mathcal{T}$ follows as in Ramras \cite{ramras-stablemoduli}, and we have shown that diagram \ref{dgrm1} commutes.\\

Now consider the commutative diagram:

\begin{equation*}
\xymatrix{\text{Hom}^{E}(\Gamma,G)  \ar[r]^-{f^{*}\circ B} \ar[d]^{\text{Id}} & \text{Map}_{*}^{E}(M,BG) \ar[d]^{\text{Id}} & \ar[l] \{B(\mathcal{H}(A))\circ f\} \ar[d]^{i} \\
\text{Hom}^{E}(\Gamma,G)\ar[r]^-{f^{*}\circ B}& \text{Map}_{*}^{E}(M,BG) &\ar[l]_{q} \text{Map}_{*}^{G}(E,EG) \\}
\end{equation*}

Here $i$ sends $B(\mathcal{H}(A))\circ f$ to $u_{\mathcal{H}(A)}\circ \tilde{f}_{\mathcal{H}(A)}$. By Gottlieb \cite{gottlieb} Theorem 5.6, $\text{Map}_{*}^{G}(E,EG)$ is weakly contractible, so the vertical maps in this diagram are weak equivalences. Then as shown in Baird and Ramras Proposition 5.4 \cite{bairdramras}, this means the homotopy pullbacks of the rows are weak equivalent. The homotopy pullback of the top row is $\text{hofib}_{B(\mathcal{H}(A))\circ f}(f^{*}\circ B)$, and the bottom row is diagram \ref{dgrm1} without the top left corner. Then it suffices to show that diagram \ref{dgrm1} is homotopy cartesian. This follows from an argument identical to that given at the end of the proof of Theorem 5.5 in Baird and Ramras \cite{bairdramras}. 
    
\end{proof}

\begin{remark}
    As noted in Baird and Ramras, Lemma \ref{lbl9} also shows that the weak homotopy type of the space of flat connections is independent of choice of Sobolev class $L^{p}_{k}$ (as long as that class satisfies the requirements on $p$ and $k$ as described previously).
\end{remark}

Theorem \ref{lbl9} leads to the following corollary about the connectivity of the space of flat connections, which is a direct generalization of Corollary 1.3 in Baird and Ramras \cite{bairdramras}.

\begin{corollary}\label{lbl10}
    Let M be a closed, smooth, connected manifold of dimension d, and let $E\rightarrow M$ be a flat principal $U(n)$-bundle over M. Then for each $A_0\in \mathcal{A}_{\mathrm{flat}}(E)$ and each $m$ such that $0< m\leq 2n-d-1$, we have $$\mathrm{Rank}(\pi_{m}(\mathcal{A}_{\mathrm{flat}}(E),A_0))\geq \sum_{i=1}^{\infty}\beta_{m+2i+1}(M).$$ In addition, if $\sum_{i=1}^{\infty}\beta_{2i+1}>0$ and $2n-d-1\geq 0$, then $\mathcal{A}_{\mathrm{flat}}(E)$ has infinitely many path components. 
\end{corollary}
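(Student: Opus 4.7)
The plan is to chain Theorem \ref{lbl9} with Theorem \ref{lbl7} through the long exact sequence of a homotopy fibration. First I would invoke Theorem \ref{lbl9} with $G=U(n)$ to replace $\mathcal{A}_{\mathrm{flat}}(E)$ up to weak equivalence by the homotopy fiber $\hofib_{B(\mathcal{H}(A_0))\circ f}\bigl(f^{*}\circ B\colon\Hom(\Gamma,U(n))\to\Map_{*}(M,BU(n))\bigr)$. The associated long exact sequence
\[\pi_{m+1}\Hom(\Gamma,U(n))\xrightarrow{(f^{*}\circ B)_{*}}\pi_{m+1}\Map_{*}(M,BU(n))\to\pi_{m}(\mathcal{A}_{\mathrm{flat}}(E),A_0)\to\pi_{m}\Hom(\Gamma,U(n))\]
then shows, for $m\geq 1$, that $\pi_{m}(\mathcal{A}_{\mathrm{flat}}(E),A_0)$ contains the cokernel of $(f^{*}\circ B)_{*}$ at degree $m+1$ as a subgroup, so any lower bound on that cokernel's rank descends to one on $\mathrm{Rank}(\pi_{m}(\mathcal{A}_{\mathrm{flat}}(E),A_0))$.

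Next I would apply Theorem \ref{lbl7} at degree $m+1$: the hypothesis $n\geq((m+1)+d)/2$ rewrites as exactly the corollary's range $m\leq 2n-d-1$, and the conclusion yields rank at least $\sum_{i=1}^{\infty}\beta_{2i+m+1}(M)=\sum_{i=1}^{\infty}\beta_{m+2i+1}(M)$, which is the claimed bound. To move from Theorem \ref{lbl7}'s statement for $GL_{n}(\bbC)$ to the $U(n)$ case at hand, I would use the commutative square whose horizontal maps are the two $f^{*}\circ B$ and whose vertical maps are induced by the inclusion $U(n)\hookrightarrow GL_{n}(\bbC)$; since this inclusion is a homotopy equivalence of topological groups, $BU(n)\simeq BGL_{n}(\bbC)$, so the right vertical map is a weak equivalence, and a diagram chase shows the $U(n)$-cokernel surjects onto the $GL_{n}(\bbC)$-cokernel, preserving the rank inequality.

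For the path components claim I would use the tail $\pi_{1}\Hom\to\pi_{1}\Map_{*}\to\pi_{0}(\mathcal{A}_{\mathrm{flat}}(E))$ of the same long exact sequence. Since both $\pi_{1}$ groups are abelian (the first as $\pi_{1}$ of a subspace of a power of $U(n)$, the second as $\pi_{1}$ of a mapping space into a simply connected target), the image of the boundary map has cardinality at least that of the cokernel $\pi_{1}\Map_{*}/(f^{*}\circ B)_{*}\pi_{1}\Hom$. Applying Theorem \ref{lbl7} at degree $1$ under the hypothesis $2n-d-1\geq 0$ lower-bounds this cokernel's rank by $\sum_{i=1}^{\infty}\beta_{2i+1}(M)$, which is positive by assumption, so the cokernel is infinite and hence $\mathcal{A}_{\mathrm{flat}}(E)$ has infinitely many path components.

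No step here is genuinely hard: the corollary is essentially bookkeeping on top of Theorems \ref{lbl9} and \ref{lbl7}. The one point I would verify carefully is the $U(n)$-vs-$GL_{n}(\bbC)$ comparison, since Theorem \ref{lbl7} is stated for $GL_{n}(\bbC)$ while the corollary is framed in terms of $U(n)$-bundles; this reduction via the classical homotopy equivalence $U(n)\hookrightarrow GL_{n}(\bbC)$ is the main subtlety in an otherwise mechanical argument.
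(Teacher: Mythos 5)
Your argument is correct and is essentially the paper's own proof: the paper deduces the corollary by applying the long exact sequence of the homotopy fiber sequence $\mathcal{A}_{\mathrm{flat}}(E)\rightarrow \mathrm{Hom}^{E}(\Gamma,U(n))\xrightarrow{f^{*}\circ B}\mathrm{Map}^{E}_{*}(M,BU(n))$ from Theorem \ref{lbl9} together with the cokernel bound of Theorem \ref{lbl7} at degree $m+1$ (citing the identical bookkeeping in Baird--Ramras), which is exactly your route, including the $U(n)$ versus $\mathrm{GL}_{n}(\bbC)$ comparison via $BU(n)\simeq B\mathrm{GL}_{n}(\bbC)$. One small caveat: your abelianness claims for the two $\pi_{1}$'s are neither correctly justified (a subspace of a power of $U(n)$ need not have abelian $\pi_{1}$) nor needed, since exactness of the fibration sequence already identifies the image of the boundary map with the coset space of $\mathrm{im}((f^{*}\circ B)_{*})$ in $\pi_{1}(\mathrm{Map}^{E}_{*}(M,BU(n)))$, which is infinite under the stated hypothesis.
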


\begin{proof}
    The proof is identical to that of Corollary 1.3 in Baird and Ramras \cite{bairdramras}, instead using the long exact sequence in homotopy groups for the homotopy fiber sequence $$\mathcal{A}_{\mathrm{flat}}(E)\rightarrow \text{Hom}^{E}(\Gamma,U(n))\xrightarrow{f^{*}\circ B} \text{Map}^{E}_{*}(M,BU(n))$$ established in Theorem \ref{lbl9}.
\end{proof}

For example, the space of flat connections for a flat $U(n)$-bundle over $S^{2k+1}, k\geq 1$ has infinitely many path components if $n\geq k$.

\begin{remark}
    Diagram (\ref{dgrm1}) allows for comparison between the long exact sequences in homotopy for the holonomy fibration and the homotopy fiber sequence of $f^{*}\circ B$. Choose a basepoint $A_0\in \flatc(E)$.\\
    First, note that Diagram (\ref{dgrm1}) induces a commuting diagram between the long exact sequences of the fibration $\mathcal{H}$ and the fibration $q$, through which one can identify the boundary map $\partial_{\mathcal{H}}$ of the holonomy fibration with the map $(f^{*}\circ B)_*$ through the composition
    \begin{align*}
        \pi_{k}(\mathrm{Hom}^{E}(\Gamma,G))\xrightarrow{(f^{*}\circ B)_*}\pi_{k}(\mathrm{Map}^{E}_{*}(M,BG))\xrightarrow[\cong]{\partial_{q}}\pi_{k-1}(\mathrm{Map}_{*}^{G}(E,G))\\
        \xrightarrow[\cong]{(i_*)^{-1}}\pi_{k-1}(\mathcal{G}_0), 
    \end{align*} where $i$ is the inclusion of the Sobolev gauge group into the continuous gauge group, which is a weak equivalence (\cite{palais} Theorem 13.14).\\
    
   One can also show that the classes in $\pi_{m}(\mathcal{A}_{\mathrm{flat}}(E))$ that are constructed in the proof of Corollary \ref{lbl10} can be identified as classes coming from $\pi_{m}(\mathcal{G}_0(E)).$ To see this we first construct a long exact sequence of homotopy groups \begin{align*}
       \dots \rightarrow \pi_{k}(\mathrm{Map}_{*}^{E}(M,BG))\xrightarrow{(\phi)_*\circ (i_*)^{-1}\circ \partial_q}\pi_{k-1}(\flatc(E))\\
       \xrightarrow{\mathcal{H}_*} \pi_{k-1}(\mathrm{Hom}^{E}(\Gamma,G))\xrightarrow{(f^{*}\circ B)_*}\pi_{k-1}(\mathrm{Map}_{*}^{E}(M,BG))\rightarrow \dots
   \end{align*} by applying the construction in  \cite{Hatcher} Section 2.2, Exercise 38 to the commuting diagram of long exact sequences for the fibrations $\mathcal{H}$ and $q$, recalling that $\mathrm{Map}_{*}^{G}(E,EG)$ is weakly contractible. Here $\mathcal{G}_0\xrightarrow{\phi}\flatc(E)$ is the inclusion of the fiber of $\mathcal{H}$ and $\partial_q$ is the boundary map for the long exact sequence of $q$. By comparing with the long exact sequence in homotopy for the homotopy fiber sequence \begin{equation}\label{fiberseq}
       \mathcal{A}_{\mathrm{flat}}(E)\rightarrow \text{Hom}^{E}(\Gamma,G) \xrightarrow{f^{*}\circ B} \text{Map}^{E}_{*}(M,BG) \end{equation} established in the proof of Theorem \ref{lbl9}, we see that classes in the image of the boundary map for (\ref{fiberseq}) are in the image of $(\phi)_*\circ (i_*)^{-1}\circ \partial_q$, so in particular are images of classes under the map $\pi_{*}(\mathcal{G}_0)\xrightarrow{\phi_*}\pi_{*}(\flatc(E)).$
\end{remark}

\section{The Case of $O(n)$-bundles}

The methods used above extend to the case of principal $O(n)$-bundles and give connectivity results for the space of flat connections for a real vector bundle. \\

Let $p_i(E)\in H^{4i}(E;\bbZ)$ denote the $i$th Pontryagin class of a real vector bundle and $\mathrm{KO}(X)$ denote the real K-theory of a CW-complex $X$. 

\begin{lemma}\label{lbl11}
    Let $X$ be a finite CW complex. Given $m>0$ and $x\in H^{4i}(S^m \wedge X;\bbQ)$, there exists a class $\phi\in \mathrm{KO}^{0}(S^m \wedge X)$ such that $p_i(\phi)=qx$ for some non-zero rational number $q\in \bbQ^{*}$, and $p_j(\phi)=0$ for $j\neq i$.
\end{lemma}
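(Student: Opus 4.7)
The plan is to mimic the proof of Lemma \ref{lbl8}, substituting the Pontryagin character for the Chern character and Pontryagin classes for Chern classes. The key input is the Pontryagin character
\[
\mathrm{ph}\colon \wt{\mathrm{KO}}^0(Y) \longrightarrow \bigoplus_{k\geq 1} H^{4k}(Y;\bbQ),
\]
a natural ring homomorphism defined by $\mathrm{ph}(E) = \mathrm{ch}(E\otimes\bbC)$, which for a finite CW complex $Y$ becomes an isomorphism upon tensoring with $\bbQ$. (This follows from the rational collapse of the Atiyah--Hirzebruch spectral sequence, since Bott periodicity gives $\mathrm{KO}_*\otimes\bbQ\cong\bbQ[u^{\pm 1}]$ with $|u|=4$.) Writing $\mathrm{ph}=\sum_k\mathrm{ph}_k$, the Newton identities together with the identification $c_{2k}(E\otimes\bbC)=(-1)^k p_k(E)$ show that each component has the shape
\[
\mathrm{ph}_k \;=\; \lambda_k\, p_k \;+\; (\text{polynomial in }p_1,\ldots,p_{k-1}),
\]
for the nonzero rational coefficient $\lambda_k = (-1)^{k+1}/(2k-1)!$.

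Next I would specialize to $Y = S^m\wedge X$. Because $Y$ is a suspension, every cup product of positive-degree reduced cohomology classes vanishes, so the decomposable terms in the above formula drop out, leaving the clean identity
\[
\mathrm{ph}_k(\xi) \;=\; \lambda_k\, p_k(\xi)
\]
for every $\xi\in\wt{\mathrm{KO}}^0(Y)$ and every $k\geq 1$. This is the critical suspension simplification that lets one recover individual Pontryagin classes directly from the Pontryagin character.

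With this in hand, given $x \in H^{4i}(Y;\bbQ)$, view $x$ as an element of $\bigoplus_{k\geq 1} H^{4k}(Y;\bbQ)$ concentrated in degree $4i$ and use rational surjectivity of $\mathrm{ph}$ to produce $\psi \in \wt{\mathrm{KO}}^0(Y)\otimes\bbQ$ with $\mathrm{ph}(\psi)=x$. Since $\wt{\mathrm{KO}}^0(Y)$ is finitely generated ($Y$ being a finite CW complex), one can write $\psi = \phi/N$ with $\phi\in\wt{\mathrm{KO}}^0(Y)$ and $N\in\bbZ_{>0}$. Additivity of $\mathrm{ph}$ gives $\mathrm{ph}(\phi)=Nx$, still supported in degree $4i$, and the identity of the previous paragraph then yields $p_i(\phi)=(N/\lambda_i)\,x$ and $p_j(\phi)=0$ for $j\neq i$ in $H^*(Y;\bbQ)$. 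Setting $q = N/\lambda_i\in\bbQ^{*}$ gives the conclusion.

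The main obstacle is marshaling the two structural facts about $\mathrm{ph}$, namely rational surjectivity onto $\bigoplus H^{4k}$ and the leading-term formula with $\lambda_k\neq 0$. Both are classical, but the nonvanishing of $\lambda_k$ is essential: without it the construction would only pin down $p_i(\phi)$ modulo decomposables, which is weaker than what the lemma requires. The suspension hypothesis on $Y$ is precisely what kills those decomposables and makes the complex-case argument of Lemma \ref{lbl8} go through verbatim in the real setting.
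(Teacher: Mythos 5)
Your proposal is correct and takes essentially the same route as the paper: the Pontryagin character $\mathrm{ch}\circ c$ is exactly the rational isomorphism of Proposition \ref{lbl12} (which the paper establishes via the conjugation involution rather than the rational collapse of the Atiyah--Hirzebruch spectral sequence, but the statement used is the same), and the paper likewise exploits the vanishing of cup products on the suspension $S^m\wedge X$ to discard decomposable terms and read off $p_i$ up to a nonzero rational multiple. Your write-up is merely more explicit about the leading coefficient $(-1)^{k+1}/(2k-1)!$ from the Newton polynomials, the identity $c_{2k}(E\otimes\bbC)=(-1)^k p_k(E)$, and the clearing of denominators, all of which the paper leaves implicit.
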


To prove this we will need the following result. Proofs of this result are sketched in many sources (see \cite{karoubi}). A detailed proof can be found in the Appendix. 

\begin{proposition}\label{lbl12}
    For $X$ a finite CW complex, there is a ring isomorphism $$\mathrm{KO}^{0}(X)\otimes \bbQ \cong \prod_{i\geq 0}H^{4i}(X;\bbQ)$$ induced by the complexification map $\mathrm{KO}^0(X)\rightarrow \mathrm{K}^0(X)$ followed by the Chern character isomorphism. 
\end{proposition}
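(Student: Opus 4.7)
The plan is to realize the right-hand side $\prod_{i \geq 0} H^{4i}(X;\bbQ)$ as the $+1$ eigenspace of complex conjugation on $\mathrm{K}^0(X) \otimes \bbQ$, and then identify this eigenspace with the image of rational complexification $c_\bbQ \co \mathrm{KO}^0(X) \otimes \bbQ \to \mathrm{K}^0(X) \otimes \bbQ$. The involution in question is $E \mapsto \overline{E}$ on complex bundles, which negates the Chern roots and so satisfies $c_k(\overline{E}) = (-1)^k c_k(E)$; consequently it acts on $H^{2i}(X;\bbQ)$ by $(-1)^i$ under the Chern character isomorphism $\mathrm{ch}\co \mathrm{K}^0(X) \otimes \bbQ \xrightarrow{\cong} \prod_{i \geq 0} H^{2i}(X;\bbQ)$, so the fixed subring is precisely $\prod_{i \geq 0} H^{4i}(X;\bbQ)$.

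First I would check that $\mathrm{ch} \circ c$ lands in $\prod_{i \geq 0} H^{4i}(X;\bbQ)$: for a real bundle $V$ the canonical isomorphism $\overline{V \otimes_\bbR \bbC} \cong V \otimes_\bbR \bbC$ (complex conjugation fixes any complexified real bundle) shows that the image of $c$ lies in the $+1$ eigenspace of conjugation. Multiplicativity and naturality of the map are immediate from the corresponding properties of $c$ and $\mathrm{ch}$, so the map in the proposition is a well-defined natural ring homomorphism.

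The key step is to show that rationally, $c_\bbQ$ is injective and surjects onto the conjugation fixed points. I would invoke the standard identities $r \circ c = 2\cdot \mathrm{id}$ on $\mathrm{KO}^0(X)$ and $(c \circ r)(w) = w + \overline{w}$ on $\mathrm{K}^0(X)$, where $r$ denotes realification. The first forces $c_\bbQ$ to be injective. For surjectivity onto the fixed points: if $w \in \mathrm{K}^0(X) \otimes \bbQ$ satisfies $\overline{w} = w$, then $w = \tfrac{1}{2}(c \circ r)(w)$ lies in the image of $c_\bbQ$. Combined with the first paragraph, this identifies $\mathrm{KO}^0(X) \otimes \bbQ$ as a ring with the subring of conjugation-invariants in $\mathrm{K}^0(X) \otimes \bbQ$, which the Chern character sends isomorphically onto $\prod_{i \geq 0} H^{4i}(X;\bbQ)$.

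The main obstacle is justifying the identities $r \circ c = 2 \cdot \mathrm{id}$ and $c \circ r = \mathrm{id} + \mathrm{conj}$ at the level of K-theory classes rather than just for bundles. On a complex vector bundle $V$, the complexification of its underlying real bundle decomposes via the $\pm i$ eigenspaces of the extended complex structure as $V \otimes_\bbR \bbC \cong V \oplus \overline{V}$; on a real bundle $W$, one has $W \otimes_\bbR \bbC \cong W \oplus iW \cong W^{\oplus 2}$ as real bundles. These additive identities on bundles extend to arbitrary elements of $\mathrm{KO}^0(X)$ and $\mathrm{K}^0(X)$ by writing K-classes as formal differences and using additivity of $c$, $r$, and conjugation. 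With these identities verified, the eigenspace identification is formal, and the theorem follows.
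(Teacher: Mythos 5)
Your proposal is correct and follows essentially the same route as the paper's appendix: identify $\mathrm{KO}^0(X)\otimes\bbQ$ with the conjugation-fixed subring of $\mathrm{K}^0(X)\otimes\bbQ$ via the identities relating realification and complexification, then use equivariance of the Chern character (from $c_k(\overline{E})=(-1)^k c_k(E)$) to land in $\prod_{i\geq 0}H^{4i}(X;\bbQ)$. The only difference is cosmetic: you verify the identities $r\circ c=2$ and $c\circ r=\mathrm{id}+\mathrm{conj}$ directly at the bundle level, whereas the paper cites Karoubi for the restricted statement on the fixed subgroup.
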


\begin{proof}[Proof of Lemma \ref{lbl11}]
By Proposition \ref{lbl12} there exists a class\\$\phi\in KO^{0}(S^m\wedge X)$ so that $Ch(c(\phi))=qx$ for some $q\in \bbQ^{*}$, where \[c: \mathrm{KO}^{0}(X)\rightarrow \mathrm{K}^{0}(X)\] is the complexification. Note $Ch$ is a polynomial in the Chern classes, but all cup products in $H^{*}(S^m\wedge X;\bbQ)$ are zero. Then \[Ch(\phi)=q_{1}c_1(\phi)+\dots+q_{k}c_{k}(\phi)+\dots\] so the desired result follows as $x$ was chosen to be in a desired dimension. 
\end{proof}

\begin{theorem}\label{lbl16}
    Let $M$ be a closed, connected, smooth manifold of dimension d with fundamental group $\Gamma=\pi_{1}(M,m_0)$. Let $f:M\rightarrow B\Gamma$ be a map classifying the universal cover $\widetilde{M}\rightarrow M$ that takes the basepoint $m_0$ to the canonical basepoint of $B\Gamma$. Let $\beta_{k}(M)=\mathrm{Rank}(H^{k}(M;\bbQ))$. Then for each $\rho\in \mathrm{Hom}(\Gamma,\text{GL}_{n}(\bbR))$ and each $m\geq 0$, the induced map on homotopy groups $$(f^{*}\circ B)_{*}:\pi_{m}(\mathrm{Hom}(\Gamma,\text{GL}_{n}(\bbR)),\rho)\rightarrow \pi_{m}(\mathrm{Map}_{*}(M,B\text{GL}_{n}(\bbR)),B\rho \circ f)$$ satisfies $\mathrm{Rank}(\mathrm{coker}((f^{*}\circ B)_{*}))\geq \sum_{i=1}^{\infty}\beta_{3m+4i}(M)$ when $n\geq m+d+1$.
\end{theorem}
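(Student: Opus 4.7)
The plan is to mirror the proof of Theorem \ref{lbl7} (cf.\ Theorem 1.2 of \cite{bairdramras}), replacing rational Chern classes and $\wt{K}^0$ by rational Pontryagin classes and $\wt{KO}^0$. The hypothesis $n \geq m+d+1$ places $[S^m \wedge M, B\text{GL}_n(\bbR)]_*$ in the stable range and, using that $BO$ is an infinite loop space so that basepoints can be shifted up to isomorphism on $\pi_*$, yields an identification
\[
\pi_m(\Map_*(M, B\text{GL}_n(\bbR)), B\rho\circ f) \otimes \bbQ \;\cong\; \wt{KO}^0(S^m \wedge M) \otimes \bbQ.
\]
Under this identification and by Proposition \ref{lbl6}, the image of $(f^*\circ B)_*\otimes\bbQ$ consists of stable rational classes of $E_\sigma(M)$-bundles indexed by smooth $S^m$-families of representations $\sigma$.

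Applying Theorem \ref{lbl4} with $X = S^m$ and $Z = M$—using that $H^j(S^m;\bbQ) = 0$ for $j > m$—forces every rational characteristic class of $E_\sigma(M)$ of degree strictly greater than $2m$ to vanish. In particular, the Pontryagin class $p_{m+i}(E_\sigma) \in H^{4(m+i)}(S^m \wedge M;\bbQ)$ is zero for every $i \geq 1$, since $4(m+i) \geq 4m+4 > 2m$. Consequently, every rational class in the image of $(f^*\circ B)_*$ has $p_{m+i} = 0$ for all $i \geq 1$.

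For each $i \geq 1$, let $\{x_\alpha\}$ be a $\bbQ$-basis of $H^{3m+4i}(M;\bbQ)$. The reduced suspension isomorphism $H^{3m+4i}(M;\bbQ) \cong \wt{H}^{4(m+i)}(S^m \wedge M;\bbQ)$ sends it to linearly independent classes $\Sigma^m x_\alpha$. By Lemma \ref{lbl11} there exist $\phi_{i,\alpha} \in KO^0(S^m \wedge M)$ with $p_{m+i}(\phi_{i,\alpha}) = q_{i,\alpha}\,\Sigma^m x_\alpha$ for some nonzero $q_{i,\alpha}\in\bbQ$, and $p_j(\phi_{i,\alpha}) = 0$ for $j \neq m+i$. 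The stable identification above converts each $\phi_{i,\alpha}$ into a class $\widetilde\phi_{i,\alpha} \in \pi_m(\Map_*(M, B\text{GL}_n(\bbR)), B\rho\circ f) \otimes \bbQ$. These classes are $\bbQ$-linearly independent in the cokernel: their Pontryagin classes lie in distinct cohomological degrees for distinct $i$ and span a subspace of dimension $\beta_{3m+4i}(M)$ within each fixed degree, while every element of the image of $(f^*\circ B)_*\otimes\bbQ$ has vanishing $p_{m+i}$ by the previous paragraph. Summing over $i \geq 1$ yields $\mathrm{Rank}(\coker((f^*\circ B)_*)) \geq \sum_{i=1}^\infty \beta_{3m+4i}(M)$.

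The principal technical obstacle is pinning down the stable identification of the first paragraph: one must check that, after passing from the basepoint $B\rho\circ f$ to the trivial map via the $H$-space structure on $BO$, the image of $(f^*\circ B)_*$ is still characterized rationally by vanishing Pontryagin classes in the range supplied by Theorem \ref{lbl4}, and that the virtual $\wt{KO}^0$-classes produced by Lemma \ref{lbl11} are realized by genuine rank-$n$ real bundles in the specified path component of $\Map_*(M, B\text{GL}_n(\bbR))$. These are the exact analogues of the bookkeeping performed in \cite{bairdramras} for the unitary case, and they go through in the orthogonal setting essentially verbatim because of the hypothesis $n \geq m+d+1$.
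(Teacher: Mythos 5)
Your proposal is correct and follows essentially the same route as the paper's proof, which likewise adapts Baird--Ramras's Theorem 1.2 by combining the stable-range identification of $\pi_m(\mathrm{Map}_*(M,B\mathrm{GL}_n(\bbR)),B\rho\circ f)$ with $\widetilde{\mathrm{KO}}^{0}(S^m\wedge M)$ (via the $n$-connectivity of $B\mathrm{GL}_n(\bbR)\rightarrow B\mathrm{GL}_{n+1}(\bbR)$ and the hypothesis $n\geq m+d+1$), the vanishing of high-degree rational characteristic classes of $E_\sigma(M)$ from Theorem \ref{lbl4}, and the classes supplied by Lemma \ref{lbl11} (equivalently Proposition \ref{lbl12}) realizing a basis of $H^{3m+4i}(M;\bbQ)\cong H^{4(m+i)}(S^m\wedge M;\bbQ)$ to produce a free subgroup not meeting the image. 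The bookkeeping you flag at the end (basepoint translation via the $H$-space structure and realizing the $\widetilde{\mathrm{KO}}^0$-classes in the given component) is exactly what the paper defers to Baird--Ramras as well.
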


\begin{proof}
This proof is analogous to the proof of Theorem 1.2 in Baird and Ramras \cite{bairdramras}, with the following adjustments for real K-theory. First note that the map $B$GL$_n(\bbR)\rightarrow B$GL$_{n+1}(\bbR)$ induced by the standard inclusion is $n$-connected as we now consider the Serre fibration $O(n)\rightarrow O(n+1)\rightarrow S^n$. We also notice a change in the Betti numbers bounding\\
$\mathrm{Rank}(\mathrm{coker}((f^{*}\circ B)_{*}))$. This lower bound comes from the rank of a free abelian subgroup in $\widetilde{\mathrm{KO}}^{0}(S^m\wedge M)$ that is not in the image of $(f^{*}\circ B)_*$. We define this subgroup by associating cohomology classes with real K-theory classes using Proposition \ref{lbl12}. As this proposition gives cohomology classes in $\prod_{i\geq 0}H^{4i}(X;\bbQ)$, we can choose a $\bbQ$-basis \[\{a_{ij}\}_{j}\subset H^{3m+4i}(M;\bbQ)\cong H^{4(m+i)}(S^m\wedge M;\bbQ)\] that are realized as characteristic classes of bundles in a subgroup of \\ $\widetilde{\mathrm{KO}}^{0}(S^m\wedge M)$. We then show as in Baird and Ramras that by construction this subgroup is not in the image of $(f^{*}\circ B)_*$.
\end{proof}

\begin{corollary}\label{lbl17}
     Let M be closed, smooth, connected manifold of dimension d, and let $E\rightarrow M$ be a flat principal $O(n)$-bundle over M. Then for each $A_0\in \mathcal{A}_{\mathrm{flat}}(E)$ and each $m$ such that $0< m\leq n-d-2$, we have $$\mathrm{Rank}(\pi_{m}(\mathcal{A}_{\mathrm{flat}}(E),A_0))\geq \sum_{i=1}^{\infty}\beta_{3m+4i+1}(M).$$ In addition, if $\sum_{i=1}^{\infty}\beta_{4i+1}>0$ and $n-d-2\geq 0$, then $\mathcal{A}_{\mathrm{flat}}(E)$ has infinitely many path components. 
\end{corollary}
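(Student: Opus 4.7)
The plan is to mirror the proof of Corollary \ref{lbl10}, substituting Theorem \ref{lbl16} for Theorem \ref{lbl7}. First I would invoke Theorem \ref{lbl9} with $G = O(n)$ to produce a homotopy fiber sequence
$$\mathcal{A}_{\mathrm{flat}}(E) \longrightarrow \mathrm{Hom}^{E}(\Gamma, O(n)) \xrightarrow{f^{*}\circ B} \mathrm{Map}^{E}_{*}(M, BO(n)),$$
with basepoints $A_0$, $\mathcal{H}(A_0)$, and $B(\mathcal{H}(A_0))\circ f$. The associated long exact sequence in homotopy contains, for each $m \geq 0$, the segment
$$\pi_{m+1}(\mathrm{Hom}^{E}(\Gamma, O(n))) \xrightarrow{(f^{*}\circ B)_{*}} \pi_{m+1}(\mathrm{Map}^{E}_{*}(M, BO(n))) \xrightarrow{\partial} \pi_{m}(\mathcal{A}_{\mathrm{flat}}(E), A_0) \to \pi_{m}(\mathrm{Hom}^{E}(\Gamma, O(n))),$$
so by exactness the rank of $\pi_{m}(\mathcal{A}_{\mathrm{flat}}(E), A_0)$ is bounded below by the rank of the cokernel of $(f^{*}\circ B)_{*}$ at the $(m+1)$st homotopy group.

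Next I would feed this into Theorem \ref{lbl16} applied at level $m+1$. The hypothesis $0 < m \leq n-d-2$ yields $n \geq (m+1) + d + 1$, which is exactly the dimension bound required to invoke Theorem \ref{lbl16}, producing the desired lower bound on the cokernel rank in terms of the Betti numbers of $M$ with the degrees indexed as in the corollary statement. Composing with the preceding exact-sequence estimate then gives the first assertion.

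For the second assertion I would run the same argument with $m = 0$. The hypothesis $n-d-2 \geq 0$ gives $n \geq d+2$, which permits Theorem \ref{lbl16} to be applied at homotopy level $1$; the assumption $\sum_{i=1}^{\infty}\beta_{4i+1}(M) > 0$ then forces the cokernel of $(f^{*}\circ B)_{*}$ at $\pi_{1}$ to be a free abelian group of positive rank, hence infinite. Exactness of the tail
$$\pi_{1}(\mathrm{Map}^{E}_{*}(M, BO(n))) \xrightarrow{\partial} \pi_{0}(\mathcal{A}_{\mathrm{flat}}(E)) \to \pi_{0}(\mathrm{Hom}^{E}(\Gamma, O(n)))$$
shows that the image of $\partial$ in $\pi_{0}(\mathcal{A}_{\mathrm{flat}}(E))$ is infinite, producing infinitely many path components.

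I do not expect any serious obstacle: the substantive content has been isolated into Theorems \ref{lbl9} and \ref{lbl16}, and what remains is the exact-sequence bookkeeping already rehearsed in Corollary \ref{lbl10}. The only mild care is needed at $\pi_0$, where the tail of the sequence must be treated in the category of pointed sets rather than groups; but since only the existence of infinitely many preimages under $\partial$ is required, the standard interpretation of this tail suffices.
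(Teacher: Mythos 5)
Your strategy is exactly the paper's: the paper proves this corollary by repeating the argument of Corollary \ref{lbl10}, i.e.\ the long exact sequence of the homotopy fiber sequence $\mathcal{A}_{\mathrm{flat}}(E)\to \mathrm{Hom}^{E}(\Gamma,O(n))\xrightarrow{f^{*}\circ B}\mathrm{Map}^{E}_{*}(M,BO(n))$ furnished by Theorem \ref{lbl9}, combined with the cokernel bound of Theorem \ref{lbl16} applied one level up; your treatment of the dimension hypothesis ($0<m\leq n-d-2$ giving $n\geq (m+1)+d+1$) and of the $\pi_{0}$ tail via the pointed-set/coset interpretation is the same bookkeeping the paper delegates to Corollary \ref{lbl10} and Baird--Ramras.

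One step, however, does not check out as you assert it: the claim that Theorem \ref{lbl16} at level $m+1$ produces the Betti numbers ``indexed as in the corollary statement.'' Substituting $m+1$ for $m$ in Theorem \ref{lbl16} gives $\mathrm{Rank}(\mathrm{coker}((f^{*}\circ B)_{*}))\geq \sum_{i=1}^{\infty}\beta_{3(m+1)+4i}(M)=\sum_{i=1}^{\infty}\beta_{3m+4i+3}(M)$, not $\sum_{i=1}^{\infty}\beta_{3m+4i+1}(M)$; likewise the $m=0$ case yields $\sum_{i}\beta_{4i+3}(M)$ rather than $\sum_{i}\beta_{4i+1}(M)$. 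This off-by-two shift in degree is an inconsistency between Theorem \ref{lbl16} and the corollary as printed (the degrees in the theorem's proof come from $H^{4(m+i)}(S^{m}\wedge M;\bbQ)\cong H^{3m+4i}(M;\bbQ)$, so the shift is forced), and the paper's one-line proof does not address it either; so it is not a defect of your overall approach. But a complete write-up cannot simply assert that the stated indexing drops out: you should either carry the degrees $3m+4i+3$ through and state the conclusion with those indices, or first redo the degree bookkeeping in Theorem \ref{lbl16} so that theorem and corollary agree.
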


\begin{proof}
    The proof is identical to that of Corollary \ref{lbl10}.
\end{proof}

\begin{example}
 The space of flat connections of any flat $O(n)$-bundle over $\bbR \amsbb{P}^{4k+1}$ with $n\geq 4k+3$, $k>0$ has infinitely many path components. 
\end{example}

\section{Results for $SU(n)$, $SO(n)$, and Spin($n$)}
The results for $U(n)$ and $O(n)$ can be extended to give results for $SU(n)$, $SO(n)$, and Spin($n$) by reducing structure group for the bundles appearing Lemmas \ref{lbl8} and \ref{lbl11}.  Recall the following definitions:\\

\begin{definition}
    Let $E\rightarrow B$ be a principal $G$-bundle, and $H$ a subgroup of $G$. We say the structure group of $E$ can be reduced to $H$ if there exists a principal $H$-bundle $F$ and an isomorphism \[E\cong F\times_{H}G:= (F\times G)/H\] where $H$ acts by $h(f,g)=(fh^{-1},hg)$.
\end{definition}

\begin{corollary}[\cite{maycharclasses} Corollary 3.1]\label{lbl14}
    The structure group of a complex vector bundle can be reduced to $SU(n)$ if and only if the first Chern class is zero.  
\end{corollary}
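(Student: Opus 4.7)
The plan is to prove the statement by exhibiting the reduction of structure group as a lifting problem for classifying maps and then using the fact that the relevant obstruction is exactly the first Chern class. I would begin by recalling that the determinant homomorphism $\det\co U(n)\to U(1)$ is surjective with kernel $SU(n)$, so there is a short exact sequence of Lie groups $1\to SU(n)\to U(n)\xrightarrow{\det} U(1)\to 1$. Applying the classifying space functor $B$ yields a fiber sequence
\[
BSU(n)\xrightarrow{p} BU(n)\xrightarrow{B\det} BU(1),
\]
and I would identify $BU(1)\simeq K(\bbZ,2)$ together with the fact that $B\det$ represents the first Chern class $c_1\in H^{2}(BU(n);\bbZ)$.

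Next I would translate the notion of reduction of structure group into a lifting statement. Given a complex vector bundle $E\to X$ classified by $f\co X\to BU(n)$, a reduction of its structure group to $SU(n)$ is precisely a homotopy class of lift $\tilde f\co X\to BSU(n)$ with $p\circ\tilde f\simeq f$. I would justify this by noting that principal $SU(n)$-bundles $F$ with $F\times_{SU(n)}U(n)\cong E$ correspond to homotopy classes of such lifts, since the associated bundle construction corresponds on classifying spaces exactly to postcomposition with $p\co BSU(n)\to BU(n)$.

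For the equivalence itself, I would argue as follows. Because $BU(1)=K(\bbZ,2)$ is an Eilenberg--MacLane space, the fiber sequence above is a principal fibration, so a lift of $f$ through $p$ exists if and only if the composition $B\det\circ f\co X\to K(\bbZ,2)$ is null-homotopic. But this composite represents $c_{1}(E)\in H^{2}(X;\bbZ)$, so the lift exists iff $c_{1}(E)=0$. Conversely, if the structure group reduces to $SU(n)$ through $F$, then $c_{1}(E)=p^{*}c_{1}$ vanishes since $c_{1}$ pulls back to zero along $p\co BSU(n)\to BU(n)$ (one sees this from the Serre spectral sequence of the fibration, or directly from $\det|_{SU(n)}=1$).

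The main obstacle in this plan is verifying rigorously that a reduction of structure group corresponds bijectively (up to the appropriate notion of equivalence) to a homotopy class of lift, and that the map $B\det$ represents $c_{1}$; both facts are standard but require care about conventions. Everything else is formal obstruction theory for a fibration with Eilenberg--MacLane base. Since this is attributed to \cite{maycharclasses}, I would ultimately quote that reference and present only the sketch above.
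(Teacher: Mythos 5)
Your argument is correct: the identification of an $SU(n)$-reduction with a lift of the classifying map through $BSU(n)\to BU(n)$, together with the fibration sequence $BSU(n)\to BU(n)\xrightarrow{B\det}BU(1)\simeq K(\bbZ,2)$ and the fact that $B\det$ represents $c_1$ (equivalently $c_1(E)=c_1(\det E)$, with the converse direction following from $H^{2}(BSU(n);\bbZ)=0$), is exactly the standard obstruction-theoretic proof. The paper itself gives no argument for this statement---it is quoted directly from the cited reference---and your sketch is essentially the proof found there, so there is nothing to correct.
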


It is a standard fact that $H^{*}(BSU(n);\bbQ)\cong \bbQ[d_2,\dots,d_n]$, where $d_{i}(ESU(n))$ is the pullback of the Chern class $c_{i}(EU(n))\in H^{2i}(BU(n);\bbQ)$ under the map induced by the inclusion $SU(n)\hookrightarrow U(n)$. For an $SU(n)$-bundle $E\rightarrow X$, define $d_i(E)\in H^{2i}(X;\bbQ)$ to be the pullback of $d_i(ESU(n))$ by the classifying map of $E$.

\begin{lemma}\label{lbl15}
    Let $X$ be a finite CW complex. Then given $m>0$ and $x\in H^{2i}(S^{m}\wedge X;\bbQ)$, there exists an $SU(n)$-bundle $\phi$ over $S^{m}\wedge X$ with $d_i(\phi)=qx$ for some non-zero $q\in \bbQ$ and $d_j(\phi)=0$ for $j\neq i$.
\end{lemma}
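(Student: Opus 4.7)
The plan is to reduce the statement to Lemma \ref{lbl8}, which gives the analogous result for stable complex vector bundles, and then pass from $U(n)$-structure to $SU(n)$-structure using Corollary \ref{lbl14}.

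First I would apply Lemma \ref{lbl8} to produce a class $\psi \in K^{0}(S^{m}\wedge X)$ with $c_{i}(\psi) = qx$ for some non-zero $q \in \bbQ^{*}$ and $c_{j}(\psi) = 0$ for all $j \neq i$. Since $S^{m}\wedge X$ is a finite CW complex, by adding a trivial bundle of sufficiently large rank we may represent $\psi$ in $\widetilde{K}^{0}(S^{m}\wedge X)$ as $[V] - [\underline{k}]$, where $V$ is a genuine complex vector bundle of some rank $n$ and $\underline{k}$ denotes the trivial rank-$k$ bundle. Because trivial bundles have vanishing Chern classes in positive degrees, $V$ itself satisfies $c_{i}(V) = qx$ and $c_{j}(V) = 0$ for every $j \neq i$.

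Next, since the statement requires $d_{i}$ to be defined we are in the range $i \geq 2$ (consistent with the lower-bound sum for $SU(n)$ in Theorem \ref{introthm1} beginning at $i=2$), so in particular $c_{1}(V) = 0$. By Corollary \ref{lbl14} the structure group of $V$ reduces to $SU(n)$; that is, there exists a principal $SU(n)$-bundle $\phi$ over $S^{m}\wedge X$ with $V \cong \phi \times_{SU(n)} U(n)$ as $U(n)$-bundles. The classifying map of $V$ then factors as $S^{m}\wedge X \to BSU(n) \to BU(n)$, where the first arrow classifies $\phi$. By naturality of characteristic classes and the definition of $d_{j}$ as the pullback of $c_{j}$ along $BSU(n) \to BU(n)$, we obtain $d_{j}(\phi) = c_{j}(V)$ for every $j \geq 2$. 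Combining, $d_{i}(\phi) = qx$ and $d_{j}(\phi) = 0$ for $j \neq i$, as required.

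The argument is short because Lemma \ref{lbl8} does the real work of constructing a $K$-theory class with prescribed Chern classes; the only potential obstacle is promoting the virtual bundle to an honest $SU(n)$-bundle, and this is handled cleanly by stabilizing (to obtain an actual $U(n)$-bundle with the same Chern classes) and then invoking Corollary \ref{lbl14} to reduce the structure group, using the fact that all Chern classes below degree $2i$ vanish by construction.
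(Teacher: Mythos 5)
Your proof is correct and follows essentially the same route as the paper: invoke Lemma \ref{lbl8} (Baird--Ramras) to get the prescribed Chern classes, observe that $c_1$ vanishes since $i\geq 2$, and reduce the structure group via Corollary \ref{lbl14}, with $d_j(\phi)=c_j(V)$ by naturality. The only difference is that you spell out the stabilization from the virtual class to an honest vector bundle, a step the paper leaves implicit.
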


\begin{proof}
    Baird and Ramras Lemma 4.2 \cite{bairdramras} construct complex vector bundles that satisfy such properties for their Chern classes. Consider such a vector bundle whose first Chern class is zero. By Corollary \ref{lbl14} the classifying map of this bundle factors through $BSU(n)$. The result then follows by the properties of the Chern classes for our chosen bundle. 
\end{proof}

This then allows us to prove the analogous result to Theorem \ref{lbl7} for $SU(n)$:

\begin{theorem}
     Let $M$ be a closed, connected, smooth manifold of dimension d with fundamental group $\Gamma=\pi_{1}(M,m_0)$. Let $f:M\rightarrow B\Gamma(M,m_0)$ be a map classifying the universal cover $\widetilde{M}\rightarrow M$ that takes the basepoint $m_0$ to the canonical basepoint of $B\Gamma$. Let $\beta_{k}(M)=\mathrm{Rank}(H^{k}(M;\bbQ))$. Then for each $\rho\in \mathrm{Hom}(\Gamma,SU(n))$ and each $m\geq 0$, the induced map on homotopy groups $$(f^{*}\circ B)_{*}:\pi_{m}(\mathrm{Hom}(\Gamma,SU(n)),\rho)\rightarrow \pi_{m}(\mathrm{Map}_{*}(M,BSU(n)),B\rho \circ f)$$ satisfies $\mathrm{Rank}(\mathrm{coker}((f^{*}\circ B)_{*}))\geq \sum_{i=2}^{\infty}\beta_{2i+m}(M)$ when $n\geq (m+d)/2$.
\end{theorem}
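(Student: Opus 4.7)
The plan is to mirror the proof of Theorem \ref{lbl7} (and of Theorem \ref{lbl16}), replacing Lemma \ref{lbl8} by Lemma \ref{lbl15} and working with the $SU(n)$-characteristic classes $d_j$ (defined for $2\leq j\leq n$) in place of the Chern classes. As in those earlier arguments, an element of $\pi_m(\mathrm{Map}_*(M, BSU(n)), B\rho \circ f)$ can be identified, after translation by the basepoint, with the class of an $SU(n)$-bundle over $S^m\wedge M$. The stability hypothesis $n\geq (m+d)/2$ ensures that these classes are detected rationally by the $d_j$ in the range $2\leq j\leq n$, using that the fiber of $BSU(n)\to BSU(n+1)$ is $S^{2n+1}$.

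To produce a rank-$\sum_{i=2}^{\infty}\beta_{2i+m}(M)$ subgroup of the cokernel, I would take, for each $i\geq 2$ and each element $a$ of a fixed $\bbQ$-basis of $H^{2i+m}(M;\bbQ)\cong \widetilde H^{2(i+m)}(S^m\wedge M;\bbQ)$, the $SU(n)$-bundle $\phi_a$ supplied by Lemma \ref{lbl15} applied with $j=m+i$. Note that $m+i\geq 2$ holds automatically since $i\geq 2$, so $d_{m+i}$ is well-defined on $SU(n)$-bundles (this is precisely what forces the sum to begin at $i=2$ rather than at $i=1$ as in the $U(n)$ case). By construction $d_{m+i}(\phi_a)$ is a nonzero rational multiple of $a$ and $d_k(\phi_a)=0$ for $k\neq m+i$, so the classes $[\phi_a]$ give elements of $\pi_m(\mathrm{Map}_*(M,BSU(n)), B\rho\circ f)$ detected by $d_{m+i}$.

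Finally, I would argue that no nontrivial rational combination of the $[\phi_a]$'s lies in the image of $(f^*\circ B)_*$. If such a combination did, Proposition \ref{lbl6} would present it as the classifying class of a family bundle $E_{\rho'}(M)$ for some $\rho'\co S^m\to \mathrm{Hom}(\Gamma,SU(n))$, smoothed via Lemma \ref{lbl5}. Theorem \ref{lbl4} applied with $X=S^m$ then forces $d_j(E_{\rho'}(M))=0$ whenever $2j>2m$, in particular for $j=m+i$ with $i\geq 2$; this contradicts $d_{m+i}(\phi_a)\neq 0$. Rational linear independence of the $[\phi_a]$'s modulo torsion follows from the independence of their $d_{m+i}$-values, yielding the stated lower bound on the cokernel. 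The principal bookkeeping step, rather than any new conceptual obstacle, is carefully translating between $SU(n)$-bundles over $S^m\wedge M$ and classes in the appropriate based component of the mapping space, which proceeds exactly as in \cite{bairdramras}.
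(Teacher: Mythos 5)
Your proposal is correct and takes essentially the same route as the paper: it reduces to the Baird--Ramras argument behind Theorem \ref{lbl7}, substituting Lemma \ref{lbl15} for Lemma \ref{lbl8}, using stability of $BSU(n)\to BSU(n+1)$ (fiber $S^{2n+1}$), and killing high-degree classes on the image via Proposition \ref{lbl6}, Lemma \ref{lbl5}, and Theorem \ref{lbl4} with $X=S^m$. Your justification for starting the sum at $i=2$ (the index $m+i$ must be at least $2$, equivalently the underlying $U(n)$-bundle must have $c_1=0$ to reduce its structure group) is the same observation the paper records.
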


\begin{proof}
    The proof is similar to that of Theorem \ref{lbl7}. Note that the map induced by inclusion  $BSU(n)\rightarrow BSU(n+1)$ is again $(2n+1)$-connected. The only significant difference is that Lemma \ref{lbl15} requires the first Chern class of the $U(n)$-bundle we are pulling back to be zero, so in this case the rank of the group $A=\mathrm{Span}_{\bbZ}(\{a_{ij}\}_{i,j})\subset \tilde{\mathrm{K}}^{0}(S^m\wedge M)$ is the sum of the Betti numbers $\beta_{2i+m}(M)$ now starting with $i=2$.
\end{proof}

We then get an analogous statement to Corollary \ref{lbl10}:

\begin{corollary}
     Let M be closed, smooth, connected manifold of dimension d, and let $E\rightarrow M$ be a flat principal $SU(n)$-bundle over M. Then for each $A_0\in \mathcal{A}_{\mathrm{flat}}(E)$ and each $m$ such that $0< m\leq 2n-d-1$, we have $$\mathrm{Rank}(\pi_{m}(\mathcal{A}_{\mathrm{flat}}(E),A_0))\geq \sum_{i=2}^{\infty}\beta_{m+2i+1}(M).$$ In addition, if $\sum_{i=2}^{\infty}\beta_{2i+1}>0$ and $2n-d-1\geq 0$, then $\mathcal{A}_{\mathrm{flat}}(E)$ has infinitely many path components. 
\end{corollary}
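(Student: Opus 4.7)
The plan is to follow the same argument used for Corollary \ref{lbl10}, simply substituting the group $SU(n)$ in place of $U(n)$ and invoking the $SU(n)$ version of the cokernel bound that was just established (the theorem immediately preceding this corollary).

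First I would apply Theorem \ref{lbl9} with $G = SU(n)$, which is a compact Lie group, to obtain the homotopy fiber sequence
\[
\mathcal{A}_{\mathrm{flat}}(E) \longrightarrow \mathrm{Hom}^{E}(\Gamma, SU(n)) \xrightarrow{f^{*}\circ B} \mathrm{Map}^{E}_{*}(M, BSU(n)),
\]
and take the long exact sequence of homotopy groups based at $A_0$, $\mathcal{H}(A_0)$, and $B(\mathcal{H}(A_0))\circ f$ respectively. The portion
\[
\pi_{m+1}(\mathrm{Map}^{E}_{*}(M, BSU(n))) \xrightarrow{\partial} \pi_{m}(\mathcal{A}_{\mathrm{flat}}(E)) \longrightarrow \pi_{m}(\mathrm{Hom}^{E}(\Gamma, SU(n))) \xrightarrow{(f^{*}\circ B)_{*}} \pi_{m}(\mathrm{Map}^{E}_{*}(M, BSU(n)))
\]
shows that $\pi_{m}(\mathcal{A}_{\mathrm{flat}}(E))$ contains a subgroup isomorphic to the image of $\partial$, which in turn surjects from $\mathrm{coker}\bigl((f^{*}\circ B)_{*}\colon \pi_{m+1}(\mathrm{Hom}^{E}(\Gamma, SU(n))) \to \pi_{m+1}(\mathrm{Map}^{E}_{*}(M, BSU(n)))\bigr)$.

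Next I would apply the preceding theorem (the $SU(n)$ analogue of Theorem \ref{lbl7}) at level $m+1$. The dimension hypothesis there requires $n \geq ((m+1)+d)/2$, which is equivalent to $m \leq 2n - d - 1$, matching the range in the corollary. That theorem yields
\[
\mathrm{Rank}\bigl(\mathrm{coker}((f^{*}\circ B)_{*})\bigr) \geq \sum_{i=2}^{\infty} \beta_{2i + (m+1)}(M) = \sum_{i=2}^{\infty} \beta_{m+2i+1}(M),
\]
and chaining this with the bound from the long exact sequence gives the desired rank inequality for $\pi_{m}(\mathcal{A}_{\mathrm{flat}}(E))$.

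For the statement about path components, I would set $m = 0$ and note that under the hypothesis $2n - d - 1 \geq 0$ the preceding theorem still applies at level $1$; the action of $\pi_{1}(\mathrm{Map}^{E}_{*}(M, BSU(n)))$ on $\pi_{0}(\mathcal{A}_{\mathrm{flat}}(E))$ through $\partial$ has orbit (containing the class of $A_0$) whose cardinality is bounded below by the size of the cokernel above. Since $\sum_{i=2}^{\infty} \beta_{2i+1}(M) > 0$ forces this cokernel to have positive rank and hence to be infinite, $\mathcal{A}_{\mathrm{flat}}(E)$ has infinitely many path components. There is essentially no obstacle here: the only thing to double-check is that the index shift $i=2$ (rather than $i=1$) in the Betti number sum correctly propagates from the $SU(n)$ version of the cokernel theorem into the statement of the corollary, which it does verbatim.
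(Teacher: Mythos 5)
Your proposal is correct and follows essentially the same route as the paper, which proves this corollary by declaring it identical to Corollary \ref{lbl10}: take the homotopy fiber sequence from Theorem \ref{lbl9} with $G=SU(n)$, run the long exact sequence in homotopy, and feed in the $SU(n)$ cokernel bound at level $m+1$ (with the index shift $i=2$ carried through verbatim). The degree bookkeeping ($n\geq((m+1)+d)/2 \iff m\leq 2n-d-1$) and the $\pi_0$ argument for infinitely many components match the intended argument.
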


To analyze the case $G=SO(n)$ we again want to reduce the structure group of $O(n)$-bundles, and we then want to further reduce those to Spin($n$)-bundles. An $O(n)$-bundle $E$ reduces to a $SO(n)$-bundle if and only if the first Stiefel-Whitney class $w_{1}(E)\in H^{1}(X;\bbZ/2)$ is zero. An $SO(n)$-bundle $F$ reduces to a Spin($n$)-bundle if and only if $w_2(F)=0$ (see \cite{maycharclasses} Corollary 5.3 and Remark 6.11).

\begin{lemma}
     Let $X$ be a finite CW complex. Given $m>0$ and $x\in H^{4i}(S^m \wedge X;\bbQ)$, there exists a class $\phi\in \mathrm{KO}^{0}(S^m \wedge X)$ such that $p_i(\phi)=qx$ for some non-zero rational number $q\in \bbQ^{*}$, $p_j(\phi)=0$ for $j\neq i$, and $w_{1}(\phi)=0=w_{2}(\phi)$.  
\end{lemma}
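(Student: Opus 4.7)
The plan is to take the class $\phi \in \mathrm{KO}^0(S^m \wedge X)$ supplied by Lemma \ref{lbl11} and simply double it. First I would invoke Lemma \ref{lbl11} to obtain $\phi$ with $p_i(\phi) = qx$ for some $q \in \bbQ^*$ and $p_j(\phi) = 0$ for $j \ne i$; no control over $w_1(\phi)$ or $w_2(\phi)$ is available from that lemma, so the task is to kill these mod-$2$ classes without disturbing the rational Pontryagin data. I would set $\phi' := 2\phi \in \mathrm{KO}^0(S^m \wedge X)$ and verify that $\phi'$ satisfies all three required conditions.

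The key enabling fact is that $S^m \wedge X$ is a reduced suspension, so the reduced diagonal is null-homotopic and all reduced cup products vanish in $\tilde H^*(S^m \wedge X; R)$ for every coefficient ring $R$ (this is exactly the ``cup products are zero'' observation already used in the proof of Lemma \ref{lbl11}). From the Whitney sum formula $w(\phi') = w(\phi)^2$ I read off $w_1(\phi') = 2 w_1(\phi) = 0$ in $\bbZ/2$ coefficients, and $w_2(\phi') = 2 w_2(\phi) + w_1(\phi)^2$, which reduces mod $2$ to $w_1(\phi)^2$; since $w_1(\phi)$ lies in $\tilde H^1(S^m \wedge X; \bbZ/2)$ and reduced cup products vanish, this is zero. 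Rationally the Whitney sum formula gives $p(\phi') = p(\phi)^2 = (1 + p_i(\phi))^2 = 1 + 2 p_i(\phi) + p_i(\phi)^2$, which collapses to $1 + 2qx$ for the same cup-product reason; hence $p_i(\phi') = 2qx \ne 0$ and $p_j(\phi') = 0$ for $j \ne i$.

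I do not expect a serious obstacle here: the real work is done by Lemma \ref{lbl11}, and doubling is what trivializes the low-dimensional mod-$2$ data essentially for free once cup products are known to vanish. The only minor subtlety to handle carefully is making sure that $w_k(\phi)$ and $p_k(\phi)$ for $k > 0$ are reduced classes when I quote the cup-product vanishing; this is immediate because $S^m \wedge X$ is connected, so its degree-$k$ cohomology for $k \geq 1$ coincides with its reduced cohomology. If doubling did not suffice for some unforeseen reason, the back-up would be to note that once $w_1(\phi') = w_2(\phi') = 0$ the classifying map of $\phi'$ lifts through $B\mathrm{Spin}(N)$ in the stable range, which is the form in which the lemma will ultimately be used in the proof of the $\mathrm{Spin}(n)$ analogue of Theorems \ref{lbl7} and \ref{lbl16}.
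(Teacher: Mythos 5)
Your argument is correct and is essentially the paper's: both proofs take the class from Lemma \ref{lbl11} and add it to itself to kill the low-dimensional mod-$2$ data while only rescaling the rational Pontryagin class by a nonzero factor. The only difference is that the paper uses $4\phi$, for which $w_1=w_2=0$ follows from the Whitney sum formula alone, whereas your choice of $2\phi$ additionally invokes the (valid) vanishing of reduced mod-$2$ cup products on the suspension $S^m\wedge X$ to dispose of the term $w_1(\phi)^2$.
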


\begin{proof}
    We need to show that the classes of bundles constructed in Lemma \ref{lbl11} can be chosen to have $w_{1}(\phi)=0=w_{2}(\phi)$. If the Chern character satisfies $Ch(\phi')=q'x$ for some chosen class $x\in H^{4i}(S^m \wedge X;\bbQ)$ and $\phi'\in \mathrm{KO}^{0}(S^m \wedge X)$, then $Ch(4\phi')=4q'x$ as $Ch$ is a ring homomorphism, and as $w_{1}(E\oplus F)=w_{1}(E)+w_{1}(F)$ then $w_{1}(4\phi')=0$. As \[w_2(E\oplus F)=w_2(E)+w_1(E)w_1(F)+w_2(F)\] we also see $w_{2}(4\phi')=0$. So let $\phi=4\phi'$ and $q=4q'$.
\end{proof}

Now we get the same results on the space of flat connections as we did with $O(n)$-bundles. Note that, unlike in the unitary case, we do not have to change the lower bound on the rank to reduce to $SO(n)$ or Spin($n$).

\begin{theorem}
    Theorem \ref{lbl16} and Corollary \ref{lbl17} hold for $SO(n)$ and Spin($n$)-bundles.
\end{theorem}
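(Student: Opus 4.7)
The plan is to run the proofs of Theorem \ref{lbl16} and Corollary \ref{lbl17} essentially verbatim, replacing Lemma \ref{lbl11} with the refined version just established. First I would observe that for $G = SO(n)$ or Spin$(n)$, the rational cohomology $H^*(BG;\bbQ)$ is still a polynomial ring and is generated in the relevant range by pullbacks of the Pontryagin classes (the Euler class present in the $SO(n)$ case lies in degree $n$ and, after stabilization in $n$, does not contribute to the asymptotic argument). Consequently, the characteristic class vanishing of Theorem \ref{lbl4} applied to bundles of the form $E_\rho$ still forces the pullback of any Pontryagin class $p_i$ of sufficiently high degree to be zero, exactly as in the $O(n)$ case.

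Next I would carry out the cokernel construction of Theorem \ref{lbl16}: starting from a $\bbQ$-basis $\{a_{ij}\}_j$ of $H^{3m+4i}(M;\bbQ)$, view each $a_{ij}$ as a class in $H^{4(m+i)}(S^m \wedge M;\bbQ)$ and apply the refined lemma to realize it as the $i$th Pontryagin class of some $\phi_{ij} \in \mathrm{KO}^0(S^m \wedge M)$ with $w_1(\phi_{ij}) = 0 = w_2(\phi_{ij})$. The vanishing of $w_1$ (respectively of both $w_1$ and $w_2$) means that the classifying map of $\phi_{ij}$ factors through $BSO(n)$ (respectively $B\mathrm{Spin}(n)$) once $n$ is large enough that the bundle lies in the stable range. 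Moreover, the stabilization maps $BSO(n) \to BSO(n+1)$ and $B\mathrm{Spin}(n) \to B\mathrm{Spin}(n+1)$ remain $n$-connected, via the fibrations $SO(n) \to SO(n+1) \to S^n$ and $\mathrm{Spin}(n) \to \mathrm{Spin}(n+1) \to S^n$, so the bounds on ranks and degrees from the $O(n)$ argument carry over with no numerical change.

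The connectivity half (the analogue of Corollary \ref{lbl17}) then follows by invoking Theorem \ref{lbl9} and reading off the claimed bound from the long exact sequence in homotopy for the homotopy fiber sequence established there, identically to the $O(n)$ proof. The only step that deserves care, and which I would flag as the main potential obstacle, is verifying that the specific classes $\phi_{ij}$ produced by the refined lemma cannot already lie in the image of $(f^{*}\circ B)_{*}$; this is precisely what the vanishing of Pontryagin classes from Theorem \ref{lbl4} rules out, and that vanishing is insensitive to the choice of structure group among $O(n)$, $SO(n)$, and Spin$(n)$ because the obstruction is expressed purely in terms of Pontryagin classes, which all three classifying spaces share.
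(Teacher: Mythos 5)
Your proposal is correct and follows essentially the same route as the paper: the paper's argument is exactly to combine the refined lemma (classes with prescribed $p_i$ and $w_1=w_2=0$) with reduction of structure group to $SO(n)$ or Spin$(n)$, and then rerun the $O(n)$ proofs of Theorem \ref{lbl16} and Corollary \ref{lbl17} with no change in the numerical bounds. Your added remarks about the $n$-connectivity of the stabilization maps and the irrelevance of the Euler class in the stable range are consistent with, and make explicit, what the paper leaves implicit.
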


\section{Appendix}

In this appendix, we give a proof of Prop. \ref{lbl12}.
\begin{proof}[Proof of Prop. \ref{lbl12}]

Given a real vector bundle $E\rightarrow X$, one can consider the complexification of $E$, given by $E\otimes_{\bbR}\bbC$. This induces a map $c:\mathrm{KO}^{0}(X)\rightarrow \mathrm{K}^{0}(X)$. It is straightforward to check this is a ring homomorphism. Let $r: \mathrm{K}^{0}(X)\rightarrow \mathrm{KO}^{0}(X)$ denote the forgetful map sending a complex vector bundle to its underlying real vector bundle, a group homomorphism. Let $\overline{E}$ denote the conjugate bundle of a complex vector bundle, defined as the vector bundle whose fiber $\overline{E}_x$ is the conjugate vector space to $E_{x}$. This induces a well-defined $\bbZ/2$-action on $\mathrm{K}^{0}(X)$. Let $\mathrm{K}^{0}_{I}(X)$ denote the set of elements invariant under this action. Note that $\mathrm{Im}(c)\subseteq \mathrm{K}^{0}_{I}(X)$, and one can show (see \cite{karoubi} Prop. 2.9) that $r'=r\lvert_{\mathrm{K}^{0}_{I}(X)}$ and $c$ satisfy $r'(c(x))=2x$ and $c(r'(y))=2y$. So $c\otimes 1$ is a ring isomorphism $\mathrm{KO}^{0}(X)\otimes_{\bbZ}\bbZ[1/2]\rightarrow \mathrm{K}^{0}_{I}(X)\otimes_{\bbZ}\bbZ[1/2]$ with inverse $r'\otimes 1/2$. This extends to an isomorphism $\mathrm{KO}^{0}(X)\otimes_{\bbZ}\bbQ\rightarrow \mathrm{K}^{0}_{I}(X)\otimes_{\bbZ}\bbQ$. \\

I now claim that the Chern character isomorphism restricts to an isomorphism $$\mathrm{K}^{0}_{I}(X)\otimes_{\bbZ} \bbQ \rightarrow \prod_{i\geq 0}H^{4i}(X;\bbQ)$$ To see this, consider the conjugation action on $\mathrm{K}^{0}(X)\otimes_{\bbZ} \bbQ$ and the $\bbZ/2$-action on $\prod_{i\geq 0}H^{2i}(X;\bbQ)$ defined on components by $$\overline{x}\in H^{2i}(X;\bbQ)= \begin{cases} 
      -x &  \text{if i odd} \\
      x & \text{if i even} 
   \end{cases}
$$ \\

The Chern character is defined on vector bundle $E\rightarrow X$ by $$\mathrm{Ch}(E)=\dim(E)+\sum_{k>0}s_{k}(c_1(E),\dots,c_{k}(E))/k!$$ where $c_i(E)\in H^{2i}(X;\bbZ)$ is the $i$-th Chern class and $s_k$ is the $k$-th Newton polynomial $$s_k(c_1,\dots,c_k)=k(-1)^k\sum_{\substack{r_1+2r_2+\dots+mr_m=m \\r_1\geq 0,\dots,r_m\geq 0}}\dfrac{(r_1+r_2+\dots +r_m-1)!}{r_{1}!r_{2}!\dots r_{m}!}\prod_{i=1}^{m}(-c_i)^{r_i}$$

Using the fact that $c_{i}(\overline{E})=(-1)^{i}c_{i}(E)$ (see \cite{milnorstasheff}), it is easy to verify that the Chern character is equivariant with respect to the $\bbZ/2$-actions above. Thus Ch restricts to an isomorphism on the fixed sets of the $\bbZ/2$-action, giving our desired isomorphism. \end{proof}

\bibliographystyle{plain} % We choose the "plain" reference style
\bibliography{refs} % Entries are in the refs.bib file

\end{document}